\newtheorem{theorem}{Theorem}[section]
\newtheorem{lemma}[theorem]{Lemma}
\newtheorem{example}{Example}
\date{}
\begin{document}

\begin{frontmatter}
	
	\title{Effort Dynamics with Selective Predator Harvesting of a Ratio-dependent Predator-Prey System}

	%% Group authors per affiliation:

\author[mymainaddress]{Uganta Yadav\corref{mycorrespondingauthor}}
\cortext[mycorrespondingauthor]{Corresponding author}
\ead{uyadav@ma.iitr.ac.in}
\author[mysecondaryaddress]{Sunita Gakkhar}
%\ead[url]{}
	
	\address[mymainaddress]{Department of Mathematics, IIT Roorkee, 247667, India.}
	\address[mysecondaryaddress]{Department of Mathematics, IIT Roorkee, 247667, India.}
	
	\begin{abstract}
		Sustainable harvesting of renewable resources is a core issue in fisheries and forest management. The over-exploitation of valuable resources has led to undesirable extinction of several biological species causing non-reversible damage to ecology and environment. In this paper, the selective predator harvesting is considered in a predator prey system. The logistic growth of prey and ratio-dependent functional response is assumed. The dynamically varying effort is employed for selective harvesting of predator species using Michaelis-Menten type function The boundedness, positivity and persistence of the nonlinear dynamical system is established. The system being non-singular about origin, its dynamics is explored by transforming it to a regular system. It is observed that the system may collapse even with positive initial conditions attracted to one of the boundary points under certain conditions. The local stability of various other equilibrium states of the dynamic model is investigated. The region of attraction of interior equilibrium state is obtained using Lyapunov stability. The occurrence of hopf bifurcation about the coexistence equilibrium point with respect to the parameter m (fraction of predators available for harvesting) is established. The sustainable harvesting is possible in two ways firstly in the form of stable interior equilibrium state and secondly in the form of limit cycle. Considering the fraction m as a control variable, the optimal harvesting policy is obtained using Pontraygin's maximum principle. The two-parameter bifurcation diagram is obtained with respect to c and d (the cost of harvesting and the death rate of predators respectively). The two bi-stability regions of parameter space are identified. One of the regions shows bi-stability of the origin and the effort free equilibrium state. The other region involves the bi-stability of the origin and interior equilibrium state. It is proved that for sustainable harvesting of predators the ratio of p to c (price to cost) should be greater than a certain threshold value dependent on predator density. The analytical results are illustrated numerically with different parametric values.
	\end{abstract}
	
	\begin{keyword}
Bifurcation	\sep Bistability \sep Coexistence  \sep Optimal Harvesting \sep Stability
		
	\end{keyword}
	
\end{frontmatter}

\section{Introduction}
The vital relationship between predators and prey has been dominant subject in mathematical ecology because of its global existence, significance and for understanding of interacting populations in the natural environment. The functional response in prey predator model plays a significant role in the dynamics of ecological models. Holling justified the functional forms of types I, II and III using 
	a straightforward argument based on the division of an individual 
	predator’s time into two periods: ‘searching for’ and ‘handling of’ 
	prey  \cite{dawes2013derivation}. However, Arditi and Ginzburg argued that the functional response should be a function of ratio of prey biomass to predator biomass and this argument was supported by field observations and laboratory experiments \cite{arditi1989coupling}. Leslie Gower model was the first ratio-dependent model \cite{gupta2012bifurcation,gupta2013bifurcation}. The ratio dependent models have richer dynamics. It was observed that both the paradox of enrichment and the paradox of biological control are not valid for ratio-dependent systems\cite{alebraheem2018relationship}.\\
Considering $x(t),y(t)$ as prey and predator densities at time $t$ and taking all the parameters to be positive, a ratio-dependent predator prey system has the form \cite{xiao2006dynamics}
\begin{equation}
	\begin{split}\label{eq:(1)}
		\frac{dx}{dt}=x(1-x)-\frac{axy}{x+y}\\
		\frac{dy}{dt}=\frac{exy}{x+y}-dy
	\end{split}
\end{equation}
This is based on the assumptions that the prey population grows logistically in the absence of predators. The predators die in the absence of prey at the rate $d$ whereas $a,e$ are positive constants.  A prominent attribute of a ratio-dependent model is its immense dynamics near origin\cite{jost1999deterministic,flores2014dynamics}. Xiao and Ruan had shown the existence of complex dynamics of System \ref{eq:(1)} and the detailed investigation on the periodic solutions and its diverse bifurcations have been carried out\cite{xiao2001global}. \\
The growing need for more food and resources has led to an increased exploitation of several biological resources. On the other hand there is a global concern to protect the ecosystem at large. Although it is beneficial to mankind but unplanned, over indulgence in harvesting may lead to extinction of the harvesting species. Therefore, the sustainable harvesting policies are required to maintain a balance between economic progress and ecological diversity. The extensive techniques for optimal management of renewable resources and the long  term benefits were presented by Clark\cite{clark1976mathematical}.  May et.al. proposed two types of harvesting $(i)$ independent of the population biomass. $(ii)$ directly proportional to the population biomass. Clark\cite{clark1976mathematical} proposed non-linear harvesting which is more realistic from biological and economical perspective. The non-linear harvesting term of Michaelis Menten type is $h(E,x)=\frac{qEx}{m_1E+m_2x}$ where $q$ is the catchability coefficient, $E$ is the effort on harvesting and $m_1,m_2$ are positive constants. Here, $\lim_{E \to \infty} h(E,x)=\frac{qx}{m_1}$, which is the form of proportional harvesting. Similarly $h(E,x)$ tends to constant harvesting as $lim_{x \to \infty} h(E,x)=\frac{qE}{m_2}.$ This harvesting function makes sure that the yield of harvesting predator remains bounded whenever effort tends to infinity.\\ Considering ratio-dependent functional response, Kar et.al.\cite{kar2006bioeconomic} examined an optimal  policy for combined harvesting of two species harvested at a rate proportional to both stock and effort. It was established that the persistence of the system depends on  fishing effort. Lenzini considered different non-constant predator harvesting functions and studied emergence of different bifurcations\cite{kar2010effort}. Hopf bifurcation and transcritical bifurcation may occur with certain parametric conditions in case of linearly varying harvesting function. However, pitchfork bifurcation was observed with rational harvesting function. 
Stabilizing effect of marine reserves on fishery dynamics  \\
Dongpo Hu\cite{hu2017stability} investigated the Leslie-Gower predator prey model with Michaelis Menten type predator harvesting. The harvesting influences the system to have complex dynamical behavior including the saddle–node, transcritical, Hopf and Bogdanov–Takens bifurcations. Existence of such bifurcations stipulates that the over exploitation of resources may lead to extinction of the species. However, modified Leslie-Gower with Michaelis Menten type prey harvesting is studied by Gupta et.al.\cite{gupta2013bifurcation}. It is shown that with certain parametric conditions a situation occurs where the solutions are dependent on the
initial condition i.e., the solutions converge to the prey extinction equilibrium state for some initial
values in one region and they converge to the coexistence equilibrium state if the initial conditions lie in the other region. Das et al.\cite{das2009bioeconomic}  examined an optimal harvesting policy using Pontragin’s maximum principle for combined Michaelis Menten type harvesting which grow logistically and predation is of Holling type II. \\
It is customary to assume constant effort in harvesting models. But including the effort dynamics in the system incorporates the economic interest along with the ecological benefits to maintain sustainable development of the species\cite{kar2010effort}. Our study is focused on the ratio-dependent predator prey harvesting system incorporating effort dynamics. In this paper, it is assumed that prey are of no economic value and predator are harvested with non-linear harvesting. The predator harvesting indirectly affects prey due to reduction in predation pressure. Effort dynamics is incorporated in the system to have a better insight from the bionomic perspective.\\
This paper is organized as follows: Section 2 includes the model and mathematical preliminaries. Section 3 elaborates on the equilibrium states and their stability. In the next section 4, Hopf bifurcation analysis is implemented followed by global stability in section 5. Later the optimal harvesting policy is studied in section 6. Lastly in section 7 the numerical simulation is carried out to give the better understanding of analytical results and validating them. 
\section{Model and Mathematical Preliminaries:}
Considering $x(t)$ as prey, $y(t)$ as predator and $E(t)$ as harvesting effort, the mathematical model with ratio-dependent functional response is written as
\begin{equation}\label{eq:(2)}
	\begin{split}
		\frac{dx}{dt}=x(1-x)-\frac{axy}{x+y}=f_1(x,y,E)\\
		\frac{dy}{dt}=\frac{exy}{x+y}-dy-\frac{qmEy}{m_1E+m_2y}=f_2(x,y,E)\\
		\frac{dE}{dt}=\rho(\frac{pqmyE}{m_1E+m_2y}-cE)=f_3(x,y,E)
	\end{split}
\end{equation}
The prey species is growing logistically. The ratio-dependent functional response is considered in the model. For effort dynamics the Michael-Menten type nonlinear function is considered. The model is associated with initial conditions 
\begin{equation}\label{eq:(3)}
	 x(0)\geq 0,y(0)\geq 0,E(0)\geq 0.
 \end{equation}
The model parameters $a,e \hspace{0.1cm}\mbox{and}\hspace{0.1cm} d$ are positive and have usual meaning as in a prey- predator model. The parameters $m,p,c \hspace{0.1cm} \mbox{and}\hspace{0.1cm} \rho$ are fraction of predators available for harvesting, constant selling price, harvesting cost per unit effort and the stiffness parameter respectively. 
All the functions $f_{i},(i=1,2,3)$ and their partial derivatives are continuous in $\mathbb{R}^3_+.$ In consequence, they are Lipschitzian in $\mathbb{R}^3_+$. Hence, the solution of system \ref{eq:(2)} with non-negative initial conditions exists and is unique. 
\begin{lemma}{(Positive Invariance)}
	The system \ref{eq:(2)} is positively invariant in  $\mathbb{R}^3_+.$
\begin{proof}
Considering $X=(x,y,E)^T\in\mathbb{R}^3$ and $F:\mathbb{C_+}\longrightarrow \mathbb{R}^3$, $F \in \mathbb{C^\infty}(\mathbb{R}^3)$ the system \ref{eq:(2)}
 is written in the matrix form as $\dot{X}=F(X)=(f_1,f_2,f_3)^T$ with $X(0)\geq 0$.
It is observed that $F_i(X)|_{X_i=0}\geq 0$ whenever $X(0)\in \mathbb{R}_+^3$ such that $X_i=0$. Accordingly, for given positive initial conditions, the solution remains non-negative for $t>0$.
\end{proof}
\end{lemma}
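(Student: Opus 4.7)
The plan is to apply the standard Nagumo tangential criterion: for a closed set with piecewise-smooth boundary and a locally Lipschitz vector field, the set is positively invariant provided the vector field lies in the tangent cone at every boundary point, equivalently the outward-normal component is non-positive. The boundary of $\mathbb{R}^3_+$ decomposes into the three coordinate faces $\{x=0\}$, $\{y=0\}$, $\{E=0\}$, and the outward normal on $\{X_i=0\}$ is $-e_i$, so it suffices to check that $f_i(X)\geq 0$ whenever $X_i=0$. Existence and uniqueness of solutions on $\mathbb{R}^3_+$ have already been recorded in the paragraph preceding the statement, and I would use them in the final step.

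The verification on each face is a direct substitution. On $\{x=0\}$ with $y>0$, both $x(1-x)$ and $\tfrac{axy}{x+y}$ vanish, so $f_1=0$. On $\{y=0\}$ with $x\geq 0$ and $E\geq 0$, the predation $\tfrac{exy}{x+y}$, the decay $dy$ and the harvesting term $\tfrac{qmEy}{m_1E+m_2y}$ all vanish, giving $f_2=0$. On $\{E=0\}$ the factor $E$ is common to both summands inside $f_3$, so $f_3=0$. In every case the vector field is tangent to the corresponding face rather than strictly inward pointing, so each face is itself invariant.

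Uniqueness then closes the argument. If a trajectory starting in the interior of $\mathbb{R}^3_+$ were to first exit at some time $t_0$, then at that instant it would agree in phase-space with the solution that remains on the corresponding coordinate face (which exists by the face invariance just shown), and uniqueness for the Cauchy problem would force the two to coincide, contradicting the supposed exit. Consequently no trajectory can cross a coordinate face outward, which is exactly positive invariance.

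The one genuinely delicate point, and the step I expect to require the most care, is the singularity of the ratio-dependent response $\tfrac{axy}{x+y}$ at $(x,y)=(0,0)$ and of the harvesting term $\tfrac{qmEy}{m_1E+m_2y}$ at $(E,y)=(0,0)$: at these points the right-hand side of \ref{eq:(2)} is not classically defined, so the face substitutions above must be justified down to the singular locus. I would handle this by extending both fractions continuously by zero at the offending points, a choice made consistent by the pointwise bounds $\tfrac{xy}{x+y}\leq\min(x,y)$ and $\tfrac{Ey}{m_1E+m_2y}\leq\min(E/m_2,\,y/m_1)$, and then running the tangency argument on the closed octant. Alternatively, one could appeal to the desingularizing change of variables advertised in the introduction and transfer invariance back through it, but the direct tangency route is lighter and is the approach I would take here.
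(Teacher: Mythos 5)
Your proposal is correct and follows essentially the same route as the paper, which likewise invokes the tangency criterion by observing that $F_i(X)\geq 0$ on each face $\{X_i=0\}$ and concluding invariance from non-negativity of the boundary flux. You are in fact more careful than the paper on the singular locus $(x,y)=(0,0)$ and $(E,y)=(0,0)$, where the paper's one-line verification is stated without justification; your continuous-extension remark fills that gap but does not change the underlying argument.
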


\begin{lemma}{(Boundedness)}
	The system \ref{eq:(2)} admits bounded solution in the domain \\ $D=\{(x,y,E)\in \mathbb{R}^3:0\leq x+\frac{a}{e}y+\frac{a}{\rho ep}E\leq \frac{M}{\mu}\}$.

\begin{proof}
Let us define
\begin{equation}\label{eq:(4)}
	w(t)=x(t)+\frac{a}{e}y(t)+\frac{aE(t)}{\rho ep}
\end{equation} 
Using \ref{eq:(2)}, the time derivative of $w$ is computed as 
$$\frac{dw}{dt}=x-x^2+\frac{ady}{e}-\frac{caE}{pe}$$
For arbitrary $\mu > 0$,
$$\frac{dw}{dt}+\mu w \leq-x^2+(1+\mu)x+(\mu-d)\frac{a}{e}y+\frac{aE}{ep}(\frac{\mu}{\lambda}-c)$$
Choosing $\mu<min \{d,c\lambda\}$, there exists $M=\frac{(1+\mu)^2}{4}$ such that 
$$\frac{dw}{dt}+\mu w \leq M$$
Use of theory of differential inequality gives
$$ 0\leq w(t)\leq \frac{M}{\mu}(1-e^{-\mu t})+w(x(0),y(0),E(0))e^{-\mu t} $$
or  $0\leq w(t) \leq \frac{M}{\mu}$, as $t \longrightarrow \infty$.
\end{proof}
\end{lemma}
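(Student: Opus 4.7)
My plan is to construct a Lyapunov-type functional $w(t) := x(t) + \frac{a}{e}y(t) + \frac{a}{\rho e p}E(t)$, derive a linear differential inequality of the form $\dot w + \mu w \leq M$ for it, and then invoke a standard comparison argument to conclude ultimate boundedness in $D$.

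The first step would be to compute $\dot w$ along solutions of system \ref{eq:(2)}. The weights $a/e$ and $a/(\rho e p)$ are chosen precisely so that the nonlinear interaction terms cancel pairwise: the ratio-dependent term $axy/(x+y)$ that appears with opposite signs in $\dot x$ and in $(a/e)\dot y$ drops out, and similarly the Michaelis--Menten harvesting term $aqmyE/[e(m_1E + m_2y)]$ cancels between $(a/e)\dot y$ and $(a/(\rho e p))\dot E$. What survives is the logistic contribution $x - x^2$ together with two linear damping terms $-(ad/e)y$ and $-(ac/(ep))E$.

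Next, for an arbitrary $\mu > 0$ I would add $\mu w$ to both sides and inspect the resulting coefficients of $y$ and $E$, which are proportional to $\mu - d$ and $\mu/\rho - c$ respectively. Selecting $\mu < \min\{d,\, c\rho\}$ forces both coefficients to be non-positive, whereupon $\dot w + \mu w \leq -x^2 + (1+\mu)x$. Maximizing this concave parabola in $x$ over $x \geq 0$ gives the constant bound $M := (1+\mu)^2/4$ attained at $x = (1+\mu)/2$, so that $\dot w + \mu w \leq M$ holds throughout $\mathbb{R}^3_+$.

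The final step is to apply the standard scalar differential inequality, which integrates to $w(t) \leq (M/\mu)(1 - e^{-\mu t}) + w(0)e^{-\mu t}$, and then let $t \to \infty$ to obtain $\limsup_{t \to \infty} w(t) \leq M/\mu$. Combined with the positive invariance established in the preceding lemma (which secures $w \geq 0$), this confines every trajectory to the slab $D$ eventually. The only real subtlety is fixing the three weights in $w$ so that \emph{all} coupling nonlinearities cancel simultaneously while the remaining linear terms keep their stabilizing signs; I expect this balancing to be the main thing to get right, whereas the Gronwall-type estimate afterwards is routine.
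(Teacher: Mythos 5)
Your proposal is correct and follows essentially the same route as the paper: the same weighted sum $w = x + \frac{a}{e}y + \frac{a}{\rho e p}E$, the same cancellation of the ratio-dependent and Michaelis--Menten terms, the same choice $\mu < \min\{d, c\rho\}$ and $M = (1+\mu)^2/4$, and the same comparison argument. In fact your computed derivative $\dot w = x - x^2 - \frac{ad}{e}y - \frac{ac}{ep}E$ carries the correct (negative) signs on the damping terms, whereas the paper's displayed expression has a sign typo on the $y$ term that its subsequent inequality silently corrects.
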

As a consequence of limited resources, there is natural hindrance to the growth of species. So, boundedness of solutions is an imperative condition for the system to be biologically rational.

\begin{lemma}{(Persistance)}\label{lemma 2.3}
The system \ref{eq:(2)} is persistant if
\begin{equation}\label{eq:5}	e>d\hspace{0.5cm}\mbox{and}\hspace{0.5cm}\frac{p}{c}>\frac{m_2}{qm}.
\end{equation} 
\begin{proof} Since variables $x,y,E$ are positively invariant, so from the prey equation
$$\frac{dx}{dt}\leq x(1-x)$$ 
Now, the Comparison test gives 
$$\limsup_{n\to \infty}x(t)\leq 1$$
Similarly for predator equation
$$\hspace{-2.5cm}\frac{dy}{dt}=\frac{exy}{x+y}-dy-\frac{qmEy}{m_1E+m_2y}$$
$$\hspace{-3cm}\leq \frac{y}{1+y}(e-d-dy)$$
$$\implies \limsup_{n\to\infty}y(t)\leq \frac{-d+e}{d}=\frac{e}{d}-1=\bar{y}$$ \\
So, $\limsup_{n\to\infty}y(t)>0$ whenever $e>d.$\\
From the third equation of the system
$$\frac{dE}{dt}=\rho(\frac{pqmyE}{m_1E+m_2y}-cE)$$
$$\hspace{3.5cm}\leq\frac{\rho E}{m_1E+m_2\bar{y}}((pqm-cm_2)\bar{y}-cm_1E)$$
which implies $$\limsup_{n\to\infty} E(t)\leq \frac{(pqm-cm_2)}{cm_1}\bar{y}\hspace{0.8cm}$$\
which is positive provided $\frac{p}{c}>\frac{m_2}{qm}.$
\end{proof}
\end{lemma}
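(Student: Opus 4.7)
The plan is to apply the differential-inequality (comparison) principle sequentially to the three equations of system \ref{eq:(2)}, propagating at each step the asymptotic upper bound just obtained into the next equation, so that each comparison reduces to an autonomous scalar ODE whose unique stable positive equilibrium exists under exactly one of the hypotheses in \ref{eq:5}.

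First, positivity of $y$ gives $axy/(x+y)\ge 0$, so $\frac{dx}{dt}\le x(1-x)$, and the standard logistic comparison yields $\limsup_{t\to\infty}x(t)\le 1$. Next, I would fix any $\varepsilon>0$ and restrict to a tail interval on which $x\le 1+\varepsilon$; dropping the nonnegative harvesting term from the predator equation and using the identity $\frac{exy}{x+y}-dy=\frac{y}{x+y}\bigl(ex-d(x+y)\bigr)$ yields the scalar comparison
$$\frac{dy}{dt}\le \frac{y}{(1+\varepsilon)+y}\Bigl(e(1+\varepsilon)-d\bigl((1+\varepsilon)+y\bigr)\Bigr).$$
The right-hand side is of logistic type and has a unique stable positive equilibrium whenever $e>d$; letting $\varepsilon\to 0$ gives $\limsup_{t\to\infty} y(t)\le (e-d)/d =: \bar y$. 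Third, noting that $y\mapsto y/(m_1E+m_2y)$ is increasing in $y$ (which I would verify by direct differentiation), substituting the asymptotic bound $y\le\bar y+\varepsilon$ into the effort equation produces
$$\frac{dE}{dt}\le \frac{\rho E}{m_1E+m_2\bar y}\bigl((pqm-cm_2)\bar y-cm_1E\bigr),$$
whose positive stable equilibrium exists precisely when $pqm-cm_2>0$, i.e.\ $p/c>m_2/(qm)$, delivering the asymptotic bound on $E$.

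The main technical obstacle is handling the sequential bootstrap cleanly: the bound on $x$ is only attained in the limit, so the predator comparison must be carried out on an $\varepsilon$-tail interval before one passes to $\varepsilon\to 0$, and the same point recurs when going from $y$ to $E$. A secondary subtlety is the singularity of the ratio-dependent functional response at the origin, which requires strict positivity of $x,y,E$ from the positive-invariance lemma in order to divide by $x+y$ and by $m_1E+m_2y$ freely along trajectories. Together the three comparison steps localize the attractor inside a positive region whose $y$- and $E$-axis scales are dictated by the two thresholds in \ref{eq:5}, which is the sense of persistence invoked in the statement.
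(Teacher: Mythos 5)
Your proposal follows the same sequential comparison argument as the paper's proof: bound $\limsup_{t\to\infty}x$ by the logistic equation, feed that into the predator equation to obtain $\limsup_{t\to\infty}y\le (e-d)/d=\bar y$, and feed that (via the monotonicity of $y/(m_1E+m_2y)$ in $y$) into the effort equation to obtain the bound on $\limsup_{t\to\infty}E$, with positivity of the last two bounds requiring exactly the two conditions in \ref{eq:5}. The only difference is that you make explicit the $\varepsilon$-tail bookkeeping and the monotonicity check that the paper suppresses; otherwise the two arguments coincide, including the (shared) feature that only upper bounds on the limits superior are actually derived.
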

Therefore, for sustainable harvesting of predators the ratio of price to cost should be greater than a certain threshold value dependent on predators. Otherwise  harvesting will not be profitable.
\section{Equilibrium states and their stability:}
The system \ref{eq:(2)} has the following three boundary equilibrium states and one interior or bionomic equilibrium state:\\
\textbf{Boundary equilibrium:}
\begin{enumerate}
	\item The trivial state $E_0=(0,0,0)$ exists for all parametric values.
	\item The predator and effort free axial equilibrium $E_1=(1,0,0)$ always exist.
	\item Let \begin{equation}\label{eq:(6)}
		x_1=1-\frac{a(e-d)}{e},y_1=\frac{(e-d)}{d}x_1.
	\end{equation}
Then the effort free equilibrium $E_2=(x_1,y_1,0)$ 
	exists provided the following conditions are satisfied:
	
	\begin{equation}\label{eq:(7)}
		\{  e>d :0<a\leq 1\}\hspace{0.2cm}\mbox{and}\hspace{0.2cm} \{d<e<\frac{ad}{a-1}:a>1\}.
	\end{equation} 
\end{enumerate}  
\textbf{Bionomic equilibrium:}\\
%The coexistence or bionomic equilibrium is obtained by the simultaneous
%solution of the system
%$$(1-x)-\frac{ay}{x+y}=0$$
%$$\frac{ex}{x+y}-d-\frac{qmE}{m_1E+m_2y}=0$$
%$$\frac{pqmy}{m_1E+m_2y}-c=0$$
Let \begin{equation}\label{eq:(8)}
	x^{*}=1-a+\frac{ad}{e}+\frac{a(pqm-cm_2)}{epm_1}, y^{*}=\frac{x^{*}(1-x^{*})}{a+x^{*}-1},E^{*}=\frac{pqm-cm_2}{cm_1}y^{*}.
\end{equation} 
Then the interior equilibrium point $E_3=(x^{*},y^{*},E^{*})$ 
exists if the following condition is satisfied:
\begin{equation}\label{eq:(9)}
	0<pqm-cm_2<pm_1(e-d).
\end{equation}

\subsection{Stability of Origin}

As the system is singular at $E_0$, its stability is investigated using blow up technique\cite{hsu2003ratio}. Accordingly, the system \ref{eq:(2)} is transformed using the transformation $(u,y,v)=(\frac{x}{y},y,\frac{y}{E})$ as
\begin{equation}\label{eq:(10)}
	\begin{split}
		\frac{du}{dt}=\frac{u}{1+u}(-A-Bu)-u^2y+\frac{mqu}{m_1+m_2v}\\
		\frac{dy}{dt}=-dy+\frac{euy}{u+1}-\frac{mqy}{m_1+m_2v}\\
		\frac{dv}{dt}=-dv+\frac{euv}{1+u}-\frac{mqv}{m_1+m_2v}-\rho(v(\frac{pmqv}{m_1+m_2v}-c))
	\end{split}
\end{equation}
The new model constants are defined as: $A=a-d-1, B=e-d-1.$\\
For the transformed system \ref{eq:(10)}, the interest is only in the boundary equilibrium points:
\begin{enumerate}
	\item The state $E_{00}=(0,0,0)$ exists for all parametric values.
	\item Let $\bar{x}=\frac{A-\frac{mq}{m_1}}{\frac{mq}{m_1}-B}$, then the state $E_{10}=(\bar{x},0,0)$ exists if one of the following conditions is satisfied:
	\begin{subequations}
		\begin{align}
			\{A<\frac{mq}{m_1}<B\} \label{eq:(11a)} \\
			or \hspace{0.1cm} \{B<\frac{mq}{m_1}<A\}.\label{eq:(11b)}
		\end{align}
	\end{subequations}  
	\item Let $\bar{E}=\frac{m_1(\rho c-d)-mq}{\rho pmq-m_2(\rho c-d)}$. Then the state $E_{100}=(0,0,\bar{E})$ exists provided one of the following conditions is satisfied:
	\begin{subequations}
		\begin{align}
			\{0<\frac{\rho pmq}{m_2}<\rho c-d<\frac{mq}{m_1}\} \label{eq:(12a)}\\or \hspace{0.1cm} \{0<\frac{mq}{m_1}<\rho c -d<\frac{\rho pmq}{m_2}\}\label{eq:(12b)}
		\end{align}
	\end{subequations}
	\end{enumerate}
The Jacobian matrix $J_0=[a_{ij}]_{3\times 3}$ of the system \ref{eq:(10)} computed at a point $(u,y,E)$ is given below\\
$a_{11}=-\frac{A+2Bu+Bu^2}{(1+u)^2}-2uy+\frac{mq}{(m_1+m_2v)},a_{12}=-u^2,a_{13}=\frac{-mqum_2}{(m_1+m_1v)^2}$,\\
$a_{21}=\frac{ey}{(1+u)^2},
a_{22}=-d+\frac{eu}{1+u}-\frac{mq}{m_1+m_2v},a_{23}=\frac{mqym_2}{(m_1+m_2v)^2}$\\
$a_{31}=\frac{ev}{(1+u)^2},a_{32}=0,a_{33}=-d+\frac{eu}{1+u}-\frac{m_1mq}{(m_1+m_2v)^2}-\frac{\lambda pmq(2vm_1+v^2m_2)}{(m_1+m_2v)^2}+\rho c.$

\begin{lemma}
The state $E_{00}$ of the transformed system \ref{eq:(10)} is stable whenever the condition given below is satisfied:
\begin{equation}\label{eq:(13)}
	\rho c-d<\frac{mq}{m_1}<A
\end{equation}
\begin{proof}
	The matrix $J_0$  at point $E_{00}$ is a diagonal matrix with diagonal entries 
	$$a_{11}=-A+\frac{mq}{m_1},\hspace{0.2cm}a_{22}=-d-\frac{mq}{m_1},\hspace{0.2cm} a_{33}=\rho c-d-\frac{mq}{m_1}$$
	Accordingly, $E_{00} $ is stable provided condition \ref{eq:(13)} is satisfied.
\end{proof}
\end{lemma}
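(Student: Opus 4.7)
The plan is to apply the standard linearization / Hartman–Grobman criterion to the transformed system \ref{eq:(10)} at the equilibrium $E_{00}=(0,0,0)$. Since the blow-up change of variables $(u,y,v)=(x/y,\,y,\,y/E)$ was introduced precisely to desingularize the origin, I expect the Jacobian $J_0$ to be well-defined (and in fact very simple) at $(0,0,0)$, so the argument reduces to reading off the signs of three real eigenvalues.

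First I would substitute $u=y=v=0$ into each of the nine entries $a_{ij}$ listed just above the lemma. Inspecting them term by term, every off-diagonal entry carries at least one factor of $u$, $y$, or $v$: $a_{12}=-u^2$, $a_{13}\propto u$, $a_{21}\propto y$, $a_{23}\propto y$, $a_{31}\propto v$, and $a_{32}=0$. Therefore the whole upper- and lower-triangular parts vanish at $E_{00}$ and the matrix $J_0(E_{00})$ is diagonal, exactly as the author claims, with entries
\[
a_{11}=-A+\tfrac{mq}{m_1},\qquad a_{22}=-d-\tfrac{mq}{m_1},\qquad a_{33}=\rho c-d-\tfrac{mq}{m_1}.
\]
(For $a_{33}$ I would quickly verify that the messy $v$-dependent pieces collapse as $v\to 0$ to $-\tfrac{mq}{m_1}$, which is the only nontrivial simplification in the whole computation.)

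Because $J_0(E_{00})$ is diagonal, its eigenvalues are precisely $a_{11},a_{22},a_{33}$. Linear stability then amounts to forcing each to be strictly negative. The condition $a_{22}<0$ is automatic given positivity of the parameters $d,m,q,m_1$. The remaining two inequalities $a_{11}<0$ and $a_{33}<0$ rewrite as $\tfrac{mq}{m_1}<A$ and $\rho c-d<\tfrac{mq}{m_1}$, which together are exactly the hypothesis \ref{eq:(13)}. I would close by invoking Hartman–Grobman (all eigenvalues hyperbolic with negative real part) to conclude local asymptotic stability of $E_{00}$.

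There is really no deep obstacle here; the substantive work was done earlier when the blow-up transformation was used to replace the singular origin of \ref{eq:(2)} by the regular point $E_{00}$ of \ref{eq:(10)}. The only step that merits any care is the evaluation of $a_{33}$ at $v=0$, where one must check that the $\rho$-dependent harvesting terms contribute only the expected $-\tfrac{mq}{m_1}$ and not any additional constant; everything else is an immediate substitution.
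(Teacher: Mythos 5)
Your proposal is correct and follows essentially the same route as the paper: evaluate $J_0$ at $E_{00}$, note that every off-diagonal entry carries a factor of $u$, $y$, or $v$ and hence vanishes, and read the stability condition \ref{eq:(13)} directly off the three diagonal eigenvalues. The extra care you take with the $v$-dependent terms in $a_{33}$ and the explicit appeal to linearization are fine but add nothing beyond the paper's argument.
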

\begin{lemma}

The state $E_{10}$ of the transformed system \ref{eq:(10)} is stable whenever it exists under condition \ref{eq:(11a)} and the following condition is satisfied:
\begin{equation}\label{eq:(14)}
	e<\frac{(1+\bar{x})}{\bar{x}}(d+\frac{mq}{m_1}-\rho c).
\end{equation}
\begin{proof}
	The matrix $J_0$ at $E_{10}$ is a upper triangular matrix with 
	$$a_{11}=\frac{(A-B)\bar{x}}{(1+\bar{x})^2},\hspace{0.2cm} a_{22}=-\frac{e\bar{x}}{1+\bar{x}}-(d+\frac{mq}{m_1}),\hspace{0.2cm} a_{33}=\frac{e\bar{x}}{1+\bar{x}}-(d+\frac{mq}{m_1}+\rho c)$$
	For stability, all the eigenvalues of $J_{0}$ at $E_{10}$ must be negative. Now, $a_{11}<0$ is ensured under existence condition \ref{eq:(11a)} and $a_{22}<a_{33}$. So, $a_{33}<0$ under condition \ref{eq:(14)} . Hence proved.
\end{proof}
\end{lemma}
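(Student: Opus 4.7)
The plan is to linearize the transformed system \ref{eq:(10)} about $E_{10}=(\bar{x},0,0)$ and exploit the fact that two of the three coordinates vanish at this point. Since $y=0$ and $v=0$, each of the entries $a_{21}=\frac{ey}{(1+u)^2}$, $a_{23}=\frac{mqy m_2}{(m_1+m_2v)^2}$, $a_{31}=\frac{ev}{(1+u)^2}$ and $a_{32}$ evaluated at $E_{10}$ is zero. Hence $J_0|_{E_{10}}$ is upper triangular and its eigenvalues are exactly the three diagonal entries $a_{11},a_{22},a_{33}$, as listed in the statement. Local asymptotic stability will then reduce to checking that each of these three numbers is strictly negative.

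First I would handle $a_{11}=\dfrac{(A-B)\bar{x}}{(1+\bar{x})^2}$. The simplification of the full expression $-\tfrac{A+2B\bar{x}+B\bar{x}^2}{(1+\bar{x})^2}+\tfrac{mq}{m_1}$ down to this compact form is the only nontrivial algebraic step; it is done by substituting the defining relation $\bar{x}(\tfrac{mq}{m_1}-B)=A-\tfrac{mq}{m_1}$ (i.e.\ $A+B\bar{x}=\tfrac{mq}{m_1}(1+\bar{x})$) and collecting terms. Once this is in hand, the existence hypothesis \ref{eq:(11a)}, which asserts $A<\tfrac{mq}{m_1}<B$, immediately gives $\bar{x}>0$ and $A-B<0$, whence $a_{11}<0$.

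Next I would dispose of $a_{22}$ and $a_{33}$. The entry $a_{22}=-\tfrac{e\bar{x}}{1+\bar{x}}-(d+\tfrac{mq}{m_1})$ is manifestly negative term-by-term, so no further hypothesis is needed there. For $a_{33}=\tfrac{e\bar{x}}{1+\bar{x}}-(d+\tfrac{mq}{m_1}-\rho c)$ negativity is equivalent to $e\bar{x}<(1+\bar{x})(d+\tfrac{mq}{m_1}-\rho c)$, which, upon dividing by $\bar{x}>0$, is precisely the hypothesis \ref{eq:(14)}. Noting $a_{22}<a_{33}$ (the two differ by $-\tfrac{2e\bar{x}}{1+\bar{x}}-\rho c<0$) lets us conclude all three eigenvalues are negative from these two checks alone, and $E_{10}$ is therefore locally asymptotically stable.

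The main obstacle is purely the bookkeeping in reducing $a_{11}$ using the defining equation for $\bar{x}$; everything else (the triangular structure, the sign check for $a_{22}$, and the translation of $a_{33}<0$ into \ref{eq:(14)}) is routine once that simplification is in place. In particular, no Lyapunov construction or center manifold analysis is needed here because the Jacobian at $E_{10}$ is already hyperbolic under the stated hypotheses.
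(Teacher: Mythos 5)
Your proof is correct and follows essentially the same route as the paper: the Jacobian at $E_{10}$ is upper triangular because $y=v=0$ kills the sub-diagonal entries, and stability reduces to the signs of $a_{11}$ (via the defining relation $\frac{mq}{m_1}(1+\bar{x})=A+B\bar{x}$ and hypothesis \ref{eq:(11a)}) and of $a_{33}$ (which is hypothesis \ref{eq:(14)}), with $a_{22}<a_{33}$ handling the middle entry. Note that your form $a_{33}=\frac{e\bar{x}}{1+\bar{x}}-(d+\frac{mq}{m_1}-\rho c)$ is the one actually consistent with the Jacobian of system \ref{eq:(10)} and with condition \ref{eq:(14)}; the $+\rho c$ inside the parenthesis in the paper's displayed $a_{33}$ appears to be a sign typo.
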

Therefore, it is concluded that the state $E_{10}$ is unstable if it exists under \ref{eq:(11b)} or violates \ref{eq:(14)}.
\begin{lemma}

The state $E_{100}$ of the transformed system \ref{eq:(10)} is stable whenever it exists under condition \ref{eq:(12b)} and the following condition is satisfied:
\begin{equation}\label{eq:(15)}
	\frac{mq}{m_1+m_2\bar{E}}<A.
\end{equation}
\begin{proof}
	At $E_{100}$ the matrix $J_0$ is a lower triangular matrix with 
	$$a_{11}=-A+\frac{mq}{m_1+m_2\bar{E}},\hspace{0.2cm}
	a_{22}=-d-\frac{mq}{m_1+m_2\bar{E}},\hspace{0.2cm}
	a_{33}=\frac{mqv}{(m_1+m_2\bar{E})^2}(m_2-\rho p m_1)$$
	Accordingly, $E_{100}$ is stable whenever all the eigenvalues of $J_{0}$ at $E_{100}$ are negative. Now, $a_{33}<0$ under existence condition \ref{eq:(12b)} and $a_{11}<0$ under condition \ref{eq:(15)}. Hence, proved
\end{proof}
\end{lemma}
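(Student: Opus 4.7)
My plan is to follow exactly the template of the two preceding stability lemmas: evaluate the Jacobian $J_0$ given in the excerpt at the boundary point $E_{100}=(0,0,\bar{E})$, observe that the zero $u$ and $y$ coordinates kill all the strictly upper off-diagonal entries, and then read the eigenvalues off the diagonal.

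First, I would substitute $u=0,\ y=0,\ v=\bar{E}$ into the expressions for $a_{ij}$. The three entries that sit strictly above the diagonal all carry a factor of $u$ or $y$: namely $a_{12}=-u^2$, $a_{13}=-mqum_2/(m_1+m_2v)^2$, and $a_{23}=mqym_2/(m_1+m_2v)^2$, so each vanishes. Since $a_{32}=0$ identically, the only possibly nonzero sub-diagonal entry is $a_{31}=e\bar{E}/(1+u)^2=e\bar{E}$, confirming the claim that $J_0\big|_{E_{100}}$ is lower triangular. Consequently the spectrum of $J_0$ at $E_{100}$ is exactly the diagonal triple
\begin{equation*}
a_{11}=-A+\frac{mq}{m_1+m_2\bar{E}},\qquad a_{22}=-d-\frac{mq}{m_1+m_2\bar{E}},\qquad a_{33}=\frac{mq\bar{E}}{(m_1+m_2\bar{E})^2}\bigl(m_2-\rho p m_1\bigr).
\end{equation*}

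Next I would check the sign of each eigenvalue. The entry $a_{22}$ is automatically negative because $d>0$ and $mq/(m_1+m_2\bar{E})>0$, so it needs no hypothesis. The inequality $a_{11}<0$ is precisely condition \ref{eq:(15)}, so I would simply record this step. The only step requiring a short side argument is $a_{33}<0$: this demands $m_2<\rho p m_1$, and I would derive it by rearranging the two strict inequalities $\frac{mq}{m_1}<\rho c-d$ and $\rho c-d<\frac{\rho p m q}{m_2}$ appearing in \ref{eq:(12b)}. Chaining them gives $\frac{mq}{m_1}<\frac{\rho p m q}{m_2}$, i.e.\ $m_2<\rho p m_1$, which makes the factor $(m_2-\rho p m_1)$ negative, hence $a_{33}<0$.

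Combining the three sign computations, all eigenvalues of the linearisation at $E_{100}$ are strictly negative, so $E_{100}$ is locally asymptotically stable. The main (and only real) obstacle is the bookkeeping for $a_{33}$: one has to notice that the sign information needed there is not an extra hypothesis but is already encoded transitively in \ref{eq:(12b)}; conditions \ref{eq:(12a)} would fail this chain and thus would leave $E_{100}$ unstable, which is why the lemma selects \ref{eq:(12b)} explicitly.
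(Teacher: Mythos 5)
Your proposal is correct and follows essentially the same route as the paper: evaluate $J_0$ at $E_{100}$, note the lower-triangular structure, and sign-check the diagonal entries, with $a_{22}<0$ automatic, $a_{11}<0$ given by condition \ref{eq:(15)}, and $a_{33}<0$ following from $m_2<\rho p m_1$, which you correctly extract by chaining the inequalities in \ref{eq:(12b)}. Your write-up is in fact slightly more explicit than the paper's on that last step.
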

It is concluded that the state $E_{100}$ is unstable if it exists under \ref{eq:(12a)} or violates \ref{eq:(15)}.

\begin{theorem}{}\label{thm:3.4}
	The stability of $E_0$ is possible if one of the following is true
	
	\renewcommand{\theenumi}{(\roman{enumi})}%
	\begin{enumerate}
		\item The state $E_{00}$ of system \ref{eq:(10)} is stable.
		\item The state $E_{10}$ of system \ref{eq:(10)} exists and is stable.
		\item The state $E_{100}$ of system \ref{eq:(10)} exists and is stable.
	\end{enumerate}
\end{theorem}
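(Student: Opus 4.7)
The plan is to leverage the three preceding lemmas, each of which already supplies a stability criterion for one of the boundary equilibria $E_{00}$, $E_{10}$, $E_{100}$ of the blown-up system \ref{eq:(10)}. The remaining content of Theorem \ref{thm:3.4} is to translate stability in these transformed coordinates back into stability of the original singular point $E_0=(0,0,0)$ under system \ref{eq:(2)}. I would first recall that the substitution $(u,y,v)=(x/y,\,y,\,y/E)$ acts as a blow-up that resolves the singularity of the ratio-dependent term at the origin; consequently the single point $E_0$ lifts to the exceptional fibre on which $E_{00}$, $E_{10}$, and $E_{100}$ live, each corresponding to a distinct asymptotic direction of approach $(x/y,\,y/E)\to(u^{\star},v^{\star})$.

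With that correspondence in hand, the argument is to treat the three cases uniformly. In each case I assume a trajectory $(u(t),y(t),v(t))$ of \ref{eq:(10)} enters the basin of attraction of the relevant boundary equilibrium and then push it forward through the inverse map $x=uy$, $E=y/v$. For case (iii), $v(t)\to\bar E>0$ and $y(t)\to0$, so $x=uy\to 0$ and $E=y/v\to 0$ immediately, giving $(x,y,E)\to E_0$. For cases (i) and (ii) the inverse map is indeterminate because both $y$ and $v$ vanish; here I would extract the decay rates directly from the linearised system at $E_{00}$ (resp. $E_{10}$), whose diagonal (resp. triangular) Jacobian is explicitly given in the preceding lemma. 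Comparing the dominant exponential rates of $y$ and $v$ shows that $E=y/v$ still tends to $0$ along the stable manifold, and of course $x=uy\to0$ either because $u\to 0$ or because $y$ decays while $u$ stays bounded near $\bar x$.

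The main obstacle I expect is precisely this rate-comparison step in (i) and (ii): the mere fact that $E_{00}$ or $E_{10}$ is asymptotically stable in $(u,y,v)$-space does not tell us a priori that $y$ decays at least as fast as $v$, which is what one needs for $E=y/v\to 0$. I would handle this by reading off the three eigenvalues of $J_0$ at each point and identifying the $y$-direction and $v$-direction eigenvectors; because the matrices are triangular at both $E_{00}$ and $E_{10}$, these directions decouple and the comparison reduces to a direct inequality between two explicitly computable negative eigenvalues, together with an argument that ensures the trajectory starts in the appropriate invariant sector so that $v(t)>0$ persists.

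Finally, I would assemble the three cases into the disjunctive statement of the theorem, emphasising that stability here must be interpreted as attraction along a suitable basin rather than full Lyapunov stability in the original coordinates (since $E_0$ is singular for \ref{eq:(2)}). This interpretation is consistent with the abstract's remark that \emph{the system may collapse even with positive initial conditions attracted to one of the boundary points under certain conditions}, which is exactly the phenomenon this theorem catalogues.
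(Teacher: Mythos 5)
Your proposal takes essentially the same route as the paper: blow down the stable boundary equilibria $E_{00}$, $E_{10}$, $E_{100}$ of the transformed system through the inverse map $x=uy$, $E=y/v$ and read off convergence to the origin in each of the three asymptotic directions. In fact you are more careful than the paper, which simply asserts the rate correspondences (``$x$ faster than $y$, $y$ faster than $E$'') without the eigenvalue comparison you supply to resolve the indeterminate form $E=y/v$ in cases (i) and (ii).
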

\begin{proof}

	It can be noticed that
	\begin{itemize}
		\item $(u,y,v) \rightarrow E_{00} $  if and only if $(x,y,E)\rightarrow (0,0,0)$ when $x(t)\rightarrow 0$ faster than $y(t)$ and $y(t) \rightarrow 0$ faster than $z(t)$.
		\item $(u,y,v) \rightarrow E_{10} $ if and only if $(x,y,E)\rightarrow (0,0,0)$ when $y(t)\rightarrow 0$ faster than $z(t)$ and $x(t)\rightarrow 0$ at finite rate as $y(t)\rightarrow 0$.
		\item $(u,y,v) \rightarrow E_{100} $  if and only if $(x,y,E)\rightarrow (0,0,0)$ when $x(t)\rightarrow 0$ faster than $y(t)$ and $z(t)\rightarrow 0$ at finite rate as $y(t)\rightarrow 0$.

	\end{itemize}
\end{proof}
\textbf{\underline{Note:}}	
It is noted from the existence and stability condition of $E_{00},E_{10}$ and $E_{100}$ that one or more of the states may exist simultaneously but only one will be stable at a time.
\subsection{Stability of }%$E_1=(1,0,0)$}
To investigate the stability of predator-effort free equilibrium $E_1$, the following transformed system $(x,y,w)$ using the new variable $w=\frac{y}{E}$ is studied:
\begin{equation}\label{eq:(16)}
	\begin{split}
		\frac{dx}{dt}=x(1-x)-\frac{axy}{x+y}\\
		\frac{dy}{dt}=-dy+\frac{exy}{x+y}-\frac{mqy}{m_1+m_2w}\\
		\frac{dw}{dt}=-dw+\frac{exw}{x+y}-\frac{mqw}{m_1+m_2w}-\rho w(\frac{pmqw}{m_1+m_2w}-c)
	\end{split}
\end{equation}

Two axial/boundary steady states of the transformed system are 
\begin{enumerate}
	\item The state $E_{01}=(1,0,0)$ always exist.
	\item The state $E_{001}=(1,0,w^*)$ exists provided 
	\begin{subequations}
		\begin{align}
			\{0<\frac{mq}{m_1}<	e+\rho c-d< \frac{\rho pmq}{m_2} \} \label{eq:(17a)}\\ or \hspace{0.5cm} \{0<\frac{\rho pmq}{m_2}<e+\rho c-d<\frac{mq}{m_1}\}\label{eq:(17b)}
		\end{align}
	\end{subequations}
	
	where $w^*$ is defined as follows:
	\begin{equation}\label{eq:(18)}
		w^*=\frac{(-d+e+\rho c)m_1-mq}{\rho pmq-m_2(-d+e+\rho c)}.
	\end{equation}
	
\end{enumerate}
The Jacobian matrix $J_1=[b_{ij}]_{3\times 3}$ of the transformed system \ref{eq:(16)} is\\
$b_{11}=1-2x-\frac{ay^2}{(x+y)^2},\hspace{0.2cm}b_{12}=\frac{ax^2}{(x+y)^2},\hspace{0.2cm}b_{13}=0$\\
$b_{21}=\frac{ey^2}{(x+y)^2},\hspace{0.2cm}b_{22}=-d+\frac{ex^2}{(x+y)^2}-\frac{mq}{(m_1+m_2w)},\hspace{0.2cm}b_{23}=\frac{mqm_2y}{(m_1+m_2w)^2}$\\
$b_{31}=\frac{exy}{(x+y)^2},\hspace{0.2cm}b_{32}=-\frac{exw}{(x+y)^2},\hspace{0.2cm}b_{33}=-d+\frac{ex}{x+y}-\frac{m_1mq}{(m_1+m_2w)^2}-\rho pqm\frac{2m_1w+m_2w^2}{(m_1+m_2w)^2}+\rho c$\\
\begin{lemma}
The state $E_{01}$ is stable provided the following condition is satisfied:
\begin{equation}\label{eq:(19)}
	e+\rho c-d <\frac{mq}{m_1}.
\end{equation}
\begin{proof}
	The matrix $J_1$ at $E_{01}$ is a upper triangular matrix matrix with
	$$b_{11}=-1, \hspace{0.2cm} b_{22}=e-d-\frac{mq}{m_1},\hspace{0.2cm}b_{33}=e+\rho c-d-\frac{mq}{m_1}$$
	Accordingly, the state $E_{01}$ is stable under condition \ref{eq:(19)}.
\end{proof}
\end{lemma}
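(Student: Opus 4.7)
My plan is to follow the same template that was used for the stability results about $E_{00}$, $E_{10}$, and $E_{100}$: evaluate the Jacobian $J_1$ of the transformed system \ref{eq:(16)} at the candidate equilibrium, observe that it becomes triangular so that the eigenvalues can be read off the diagonal, and then impose negativity on each diagonal entry.

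First I would substitute $(x,y,w)=(1,0,0)$ into each entry of $J_1$. Because $y=0$ and $w=0$, the three entries $b_{13}$, $b_{21}$, $b_{23}$, $b_{31}$, $b_{32}$ collapse to zero (each contains $y$, $w$, or $xy$/$xw$ as a factor), while $b_{12}=a$ survives. This leaves an upper triangular matrix with diagonal
\[
b_{11}=-1,\qquad b_{22}=e-d-\tfrac{mq}{m_1},\qquad b_{33}=e+\rho c-d-\tfrac{mq}{m_1},
\]
as stated in the lemma. The eigenvalues of $J_1|_{E_{01}}$ are exactly these diagonal entries.

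Next I would verify each sign. The entry $b_{11}=-1$ is negative unconditionally, so it plays no role in the stability threshold. The entry $b_{33}$ is negative precisely under the hypothesis $e+\rho c-d<\tfrac{mq}{m_1}$, i.e.\ condition \ref{eq:(19)}. Finally, since $\rho,c>0$, the inequality $b_{22}<b_{33}$ holds, so $b_{33}<0$ automatically forces $b_{22}<0$. Thus condition \ref{eq:(19)} alone guarantees that all three eigenvalues are negative, and the Hartman--Grobman theorem then yields asymptotic stability of $E_{01}$.

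I do not anticipate a genuine obstacle: the entire argument is a direct eigenvalue computation on a triangular matrix, and the existence of the axial state $E_{01}=(1,0,0)$ is unconditional, so no compatibility with existence conditions needs to be checked (contrast with the earlier lemmas on $E_{10}$ and $E_{100}$). The only minor care point is recognising that the inequality $b_{22}<0$ need not be posted as a separate hypothesis because it is implied by \ref{eq:(19)}; I would state this implication explicitly so that the reader sees why a single scalar condition suffices.
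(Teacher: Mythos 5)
Your proposal is correct and follows exactly the paper's argument: evaluate $J_1$ at $(1,0,0)$, note it is upper triangular, read the eigenvalues off the diagonal, and observe that $b_{33}-b_{22}=\rho c>0$ so condition \ref{eq:(19)} alone forces all three eigenvalues to be negative. Your explicit remark that $b_{22}<0$ is implied by \ref{eq:(19)} is the same observation the paper uses implicitly.
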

\begin{lemma}
Let the state $E_{001}$ exists under condition \ref{eq:(17a)}. Then it will be stable provided it satisfies the following condition:
\begin{equation}\label{eq:(20)}
	e<d+\frac{mq}{m_1+m_2w^*}.
\end{equation}
\begin{proof}
	The matrix $J_1 $ is simplified to the following form at $E_{001}$ is 
	$$J_1|_{E_{001}}=\begin{bmatrix}
		-1 && -a && 0\\
		0&&e-d-\frac{mq}{m_1+m_2w^*}&&0\\
		0&&-ew^*&&\frac{mqw^*}{(m_1+m_2{w^*})^2}(m_2-\rho p m_1)
	\end{bmatrix}$$
	Its eigenvalues are 
	$$\lambda_1=-1\hspace{0.2cm},\lambda_2=e-d-\frac{mq}{(m_1+m_2w^*)},\hspace{0.2cm}\lambda_3=\frac{mqw^*}{(m_1+m_2w^*)^2}(m_2-\rho pm_1)$$
	So, the eigenvalues are negative whenever $\lambda_2<0$ which simplies to condition \ref{eq:(20)} and $\lambda_3<0$ which is ensured by condition \ref{eq:(17a)}. Hence, proved.
\end{proof}
\end{lemma}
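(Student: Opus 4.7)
The plan is to compute the Jacobian $J_1$ directly at the point $E_{001}=(1,0,w^*)$ and exploit the fact that $y=0$ there to collapse the matrix into an almost triangular form. Substituting $x=1,\,y=0,\,w=w^{*}$ into the entries $b_{ij}$ listed just before the lemma, I expect $b_{13}=0$ by definition, and $b_{21},\,b_{23},\,b_{31}$ all to vanish because each of those expressions carries a factor of $y$ in its numerator. The resulting $J_1|_{E_{001}}$ should therefore be block upper triangular: one eigenvalue is immediately $\lambda_1=b_{11}=1-2=-1<0$, while the remaining spectrum is governed by the $2\times 2$ block in the $(y,w)$ coordinates. Because $b_{23}=0$, that block is itself triangular, so its eigenvalues are simply $\lambda_2=b_{22}|_{E_{001}}$ and $\lambda_3=b_{33}|_{E_{001}}$.

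I would then read the two remaining eigenvalues off explicitly. Evaluating $b_{22}$ at $(1,0,w^{*})$ gives $e-d-\frac{mq}{m_1+m_2w^{*}}$, so $\lambda_2<0$ is precisely hypothesis \ref{eq:(20)}. For $\lambda_3$, a clean expression should emerge by using the equilibrium identity defining $w^{*}$, namely
\begin{equation*}
-d+e+\rho c-\frac{mq}{m_1+m_2 w^{*}}-\frac{\rho p m q\,w^{*}}{m_1+m_2 w^{*}}=0,
\end{equation*}
to eliminate the $e+\rho c-d$ pieces in $b_{33}|_{E_{001}}$. After collecting terms over the common denominator $(m_1+m_2w^{*})^2$, I expect the formula to reduce to the compact form
\begin{equation*}
\lambda_3=\frac{mq\,w^{*}}{(m_1+m_2w^{*})^{2}}\bigl(m_2-\rho p m_1\bigr).
\end{equation*}

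It then remains to verify $m_2-\rho p m_1<0$ under the existence condition \ref{eq:(17a)}. From \ref{eq:(17a)} I get both $mq<m_1(e+\rho c-d)$ and $m_2(e+\rho c-d)<\rho p m q$; multiplying these (legal since all factors are positive) and cancelling the common positive factor $mq(e+\rho c-d)$ yields $m_2<\rho p m_1$, so $\lambda_3<0$ is automatic whenever $E_{001}$ exists via \ref{eq:(17a)}. Combined with $\lambda_1=-1<0$ and the condition on $\lambda_2$, this delivers the lemma. The main obstacle I anticipate is the bookkeeping needed to simplify $b_{33}|_{E_{001}}$ to the stated compact form, since it mixes several rational terms in $m_1,m_2,w^{*},\rho,p,c,d,e,q$; the key is to apply the equilibrium identity for $w^{*}$ at the right moment rather than to expand everything first.
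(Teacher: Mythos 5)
Your proposal is correct and follows essentially the same route as the paper: evaluate $J_1$ at $(1,0,w^*)$, observe that the $y$-factors kill $b_{21},b_{23},b_{31}$ so the matrix is (block) triangular, read off $\lambda_1=-1$, $\lambda_2=e-d-\frac{mq}{m_1+m_2w^*}$, and use the equilibrium identity for $w^*$ to reduce $b_{33}$ to $\frac{mqw^*}{(m_1+m_2w^*)^2}(m_2-\rho p m_1)$, whose negativity follows from condition \ref{eq:(17a)}. Your derivation of $m_2<\rho p m_1$ (equivalently, directly from $\frac{mq}{m_1}<\frac{\rho p mq}{m_2}$) and the simplification of $b_{33}$ both check out, so the argument is complete.
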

Therefore, it is concluded that the state $E_{001}$ is unstable if it exists under \ref{eq:(17b)} or violates \ref{eq:(20)}.

\begin{theorem}{}\label{thm:3.7}
	The equilibirum state $E_1$ for the system \ref{eq:(2)} will be stable whenever one of the conditions is satisfied.
	
	\renewcommand{\theenumi}{(\roman{enumi})}%
	\begin{enumerate}
		\item The state $E_{01}$ of system \ref{eq:(16)} is stable.
		\item The state $E_{001}$ of system \ref{eq:(16)} exists and is stable.
		
	\end{enumerate}
\end{theorem}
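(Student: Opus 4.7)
The plan is to mirror the strategy used for Theorem \ref{thm:3.4}. The equilibrium $E_1=(1,0,0)$ of system \ref{eq:(2)} sits on the part of the boundary where $y=E=0$, and the predator harvesting and effort terms $\frac{qmEy}{m_1E+m_2y}$, $\frac{pqmyE}{m_1E+m_2y}$ are singular along the line $y=E=0$. Linearising directly at $E_1$ will therefore not resolve the behaviour of trajectories that approach $E_1$ along different curves. The transformation $w=y/E$ already introduced to obtain system \ref{eq:(16)} blows up this singular line and replaces the singular corner by two ordinary boundary equilibria, $E_{01}$ and $E_{001}$, whose stability has already been characterised.

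First I would record the explicit correspondence between limits in the original $(x,y,E)$ coordinates and in the transformed $(x,y,w)$ coordinates, analogous to the three bullets inside the proof of Theorem \ref{thm:3.4}. Trajectories of system \ref{eq:(2)} that approach $E_1$ with $y(t)\to 0$ faster than $E(t)\to 0$ satisfy $w=y/E\to 0$ in system \ref{eq:(16)}, i.e.\ they correspond to the state $E_{01}$; trajectories for which $y(t)$ and $E(t)$ both tend to $0$ while the ratio $y(t)/E(t)$ tends to a finite positive limit correspond to $E_{001}=(1,0,w^*)$. Conversely, any forward trajectory of system \ref{eq:(16)} converging to $E_{01}$ or to $E_{001}$ yields a trajectory of system \ref{eq:(2)} with $x(t)\to 1$, $y(t)\to 0$ and $E(t)\to 0$, the difference between the two cases being only the relative rate at which $y$ and $E$ vanish.

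Given these correspondences, the two cases of the theorem follow immediately from the two preceding lemmas. In case (i), the already established stability of $E_{01}$ under condition \ref{eq:(19)} forces $(x,y,w)\to(1,0,0)$ along all nearby trajectories of \ref{eq:(16)}; pulling back through $w=y/E$, every nearby trajectory of \ref{eq:(2)} approaches $E_1$. In case (ii), since $E_{001}$ exists only under \ref{eq:(17a)} or \ref{eq:(17b)} and is shown stable exactly under \ref{eq:(17a)} with \ref{eq:(20)}, its stability likewise yields $y,E\to 0$ with $y/E\to w^*$, and hence $(x,y,E)\to E_1$. I would also invoke the Note following Theorem \ref{thm:3.4} (only one of the transformed equilibria is stable at a time) to argue that the two cases are mutually exclusive and together cover the possible basins.

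The main technical obstacle is justifying the pull-back step rigorously: the map $(x,y,E)\mapsto(x,y,y/E)$ is only defined for $E>0$, so one must separately handle initial data with $E(0)=0$. This is clean because the plane $E=0$ is invariant under system \ref{eq:(2)}, reducing the question there to the planar ratio-dependent subsystem whose behaviour at $(1,0)$ is already understood (the $b_{11},b_{22}$ entries of $J_1$ give the correct eigenvalues). A second, milder issue is that stability of an equilibrium of the transformed system translates in general only to attractivity along the appropriate family of trajectories of the original system; I would note, as in Theorem \ref{thm:3.4}, that for the purposes of this paper it is the attracting behaviour of $E_1$ that is being asserted, so no additional Lyapunov-type estimate is needed beyond the linearised analyses already carried out.
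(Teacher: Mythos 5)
Your proposal is correct and follows essentially the same route as the paper: the paper's proof consists precisely of the two rate-of-decay correspondences ($w=y/E\to 0$ for $E_{01}$, $w\to w^*$ finite for $E_{001}$) combined with the stability lemmas for the transformed system \ref{eq:(16)}. Your additional remarks on the invariance of the plane $E=0$ and on the pull-back being defined only for $E>0$ are careful refinements that the paper omits, but they do not change the argument.
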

\begin{proof}
	It is observed that
	\begin{itemize}
		\item $(x,y,w) \rightarrow E_{01}$ if and only if $(x,y,E) \rightarrow E_{1}$ when $y(t)\rightarrow 0$ faster than $E(t).$
		\item $(x,y,w) \rightarrow E_{001}$ if and only if $(x,y,E) \rightarrow E_{1}$ when $y(t)\rightarrow 0$ at a finite rate as $E(t)\rightarrow 0.$
	\end{itemize}
\end{proof}
\textbf{\underline{Note:}}
Hence, under conditions of Theorem \ref{thm:3.7} the system \ref{eq:(2)} with positive initial conditions will tend to a situation where only preys are left with the extinction of predators.\\	
It is noted from the existence and stability condition of $E_{01}$ and $E_{001}$ that both the points may exist simultaneously but only one can be stable at a time.\\
\subsection{Stability of }%$E_2=(x_1,y_1,0)$}
The Jacobian matrix $J=[c_{ij}]_{3\times 3}$ for the original system \ref{eq:(2)} at a point $(x,y,E)$ is\\ $c_{11}=1-2x-\frac{ay^2}{(x+y)^2},\hspace{0.2cm}c_{12}=-\frac{ax^2}{(x+y)^2},\hspace{0.2cm}c_{13}=0$\\
$c_{21}=\frac{ey^2}{(x+y)^2},\hspace{0.2cm}c_{22}=-d+\frac{ex^2}{(x+y)^2}-\frac{qmm_1E^2}{(m_1E+m_2y)^2},\hspace{0.2cm}c_{23}=-\frac{qmm_2y^2}{(m_1E+m_2y)^2}$\\
$c_{31}=0,\hspace{0.2cm}c_{32}=\frac{pqmm_1E^2}{(m_1E+m_2y)^2},\hspace{0.2cm}c_{33}=\frac{pqmm_2y^2}{(m1E+m_2y)^2}-c$\\
At $E_2$,
$$J|_{E_2}=\begin{bmatrix}
	\frac{ax_1y_1}{(x_1+y_1)^2}-x&&-\frac{ax_1^2}{(x_1+y_1)^2}&&0\\
	\frac{ey_1^2}{(x_1+y_1)^2}&&-\frac{ex_1y_1}{(x_1+y_1)^2}&&-\frac{qm}{m_2}\\	0&&0&&\rho(\frac{pqm}{m_2}-c)\\
	
\end{bmatrix}$$
One of the eigenvalues of the above matrix is $\lambda_1=\rho(\frac{pqm}{m_2}-c) $.The other two eigenvalues are the roots of the equation
$$\mu_1^2+s_1\mu_1+s_2=0$$
where $s_1=x_1-\frac{(a-e)x_1y_1}{(x_1+y_1)^2}$,
$s_2=\frac{ex_1^2y_1}{(x_1+y_1)^2}.$\\

\vspace{0.5cm}
\begin{theorem}
	The effort free equilibrium $E_2$ is stable whenever
	\begin{equation}\label{eq:(21)}
		pqm-cm_2<0\hspace{0.2cm} \mbox{and}\hspace{0.2cm} a<\frac{e}{e+d}\bigg[\frac{e}{e-d}-d\bigg].
	\end{equation}
\end{theorem}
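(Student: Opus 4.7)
The plan is to exploit the block structure of the Jacobian at $E_2$ already displayed above: the third row and column essentially decouple at this equilibrium (the $(3,1)$ and $(3,2)$ entries vanish), yielding one explicit eigenvalue $\lambda_1 = \rho(pqm/m_2 - c)$ from the $E$-direction, while the remaining $2\times 2$ prey--predator block delivers two eigenvalues through the characteristic polynomial $\mu^2 + s_1\mu + s_2$. The stability question thus reduces, via Routh--Hurwitz, to three sign checks: $\lambda_1 < 0$, $s_1 > 0$, and $s_2 > 0$.

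First I would dispose of the easy pieces. Since $\rho, m_2 > 0$, the inequality $\lambda_1 < 0$ is equivalent to $pqm - cm_2 < 0$, which is exactly the first condition of the theorem. The constant term $s_2 = e x_1^2 y_1/(x_1+y_1)^2$ is automatically positive under the existence hypothesis (7), which ensures $x_1, y_1 > 0$. So the entire burden falls on showing that the second condition of the theorem forces $s_1 > 0$.

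The core step is an algebraic reduction of the inequality $s_1 = x_1 - (a-e)x_1 y_1/(x_1+y_1)^2 > 0$ to a bound on $a$. I would use the equilibrium identities obtained from evaluating the $f_1, f_2$ equations at $E_2$: the predator equilibrium equation yields $e x_1/(x_1+y_1) = d$, i.e.\ $x_1+y_1 = e x_1/d$, while (6) gives $y_1 = (e-d)x_1/d$. Dividing $s_1$ by $x_1 > 0$ and clearing the denominator $(x_1+y_1)^2$, the condition $s_1 > 0$ becomes $(x_1+y_1)^2 > (a-e)y_1$; substituting the two identities reduces this to a linear inequality in $x_1$ of the form $x_1 > d(a-e)(e-d)/e^2$, and finally substituting $x_1 = 1 - a(e-d)/e$ from (6) gives a linear inequality in $a$ whose solution, after factoring through $(e+d)(e-d) = e^2 - d^2$, yields a closed-form upper bound on $a$ of the shape stated in the theorem.

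The main obstacle is pure bookkeeping rather than any conceptual hurdle: one must track signs carefully when multiplying or dividing by $e-d > 0$ (positivity guaranteed by (7)), keep the $a$-terms gathered on one side, and factor the resulting polynomial cleanly. A minor subtlety worth flagging is that when $a \le e$ the coefficient $(a-e)$ in $s_1$ is non-positive, so $s_1 > 0$ holds trivially in that regime; the derived closed-form bound is only binding when $a > e$, and the stated inequality is intended as a single sufficient condition that covers both subcases.
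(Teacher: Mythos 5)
Your proposal follows exactly the paper's own argument: one explicit eigenvalue $\lambda_1=\rho\bigl(\tfrac{pqm}{m_2}-c\bigr)$ made negative by the first inequality, the product $s_2=\tfrac{ex_1^2y_1}{(x_1+y_1)^2}>0$ automatic from existence, and the trace condition $s_1>0$ reduced via the identities $y_1=\tfrac{e-d}{d}x_1$, $x_1+y_1=\tfrac{e}{d}x_1$ to an upper bound on $a$. One caution for when you actually do the bookkeeping: those substitutions give $s_1>0 \iff a<\tfrac{e}{e+d}\bigl[\tfrac{e}{e-d}+d\bigr]$, so the bound printed in the theorem (with $-d$) appears to be a strictly smaller, hence still sufficient but not sharp, threshold --- verify your signs rather than forcing them to match the stated form.
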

\begin{proof}
	
	The eigenvalue $\lambda_1$ of the Jacobian matrix at $E_2$ is negative if the first inequality in condition \ref{eq:(21)} is satisfied. The remaining two eigenvalues have positive product. So the eigenvalues will be negative only when the sum of eigenvalues is negative which is possible only when second inequality of condition \ref{eq:(21)}is satisfied.
	
\end{proof}
It is concluded that $E_3$ exists only if $E_2$ is unstable.
\subsection{Stability of interior}%$E_3=(x^*,y^*,E^*)$}
The matrix $J$ computed at $E_3$ is
$$J|_{E_3}=
\begin{bmatrix}
	\frac{ax^*y^*}{(x^*+y^*)^2}-x^*&&-\frac{ax^*{^2}}{(x^*+y^*)^2}&&0\\
	\frac{ey^*{^2}}{(x^*+y^*)^2}&&-\frac{ex^*y^*}{(x^*+y^*)^2}+\frac{qmm_2y^*E^*}{(m_1E^*+m_2y^*)^2}&&-\frac{qmm_2y^*{^2}}{(m_1E^*+m_2y^*)^2}\\
	0&&\frac{\rho pqmm_1E^*{^2}}{(m_1E^*+m_2y^*)^2}&&-\frac{\rho pqmm_1E^*y^*}{(m_1E^*+m_2y^*)^2}
\end{bmatrix}=
\begin{bmatrix}
	a_{11}&&a_{12}&&0\\
	a_{21}&&a_{22}&&a_{23}\\
	0&&a_{32}&&a_{33}
\end{bmatrix}$$
The corresponding characteristic polynomial is 
\begin{equation}\label{eq:(22)}
	\mu_2^3+s_{11}\mu_2^2+s_{12}\mu_2+s_{13}=0
\end{equation}
where $s_{11}=-(a_{11}+a_{22}+a_{33})=-\bigg(\frac{(a-e)x^*y^*}{(x^*+y^*)^2}+\frac{qmy^*E^*(m_2-\rho pm_1)}{(m_1E^*+m_2y^*)^2}-x^*\bigg)$\\
$s_{12}=a_{11}a_{22}+a_{11}a_{33}+a_{22}a_{33}-a_{32}a_{23}-a_{12}a_{21}=\bigg(\frac{ex^{*2}y^*}{(x^*+y^*)^2}+\frac{qmx^*y^{*2}E^*(am_2-(a-e) pm_1\rho)}{(x^*+y^*)^2(m_1E^*+m_2y^*)^2}\bigg)$\\ 
and $s_{13}=-a_{11}a_{22}a_{33}+a_{11}a_{32}a_{23}+a_{33}a_{12}a_{21}=\frac{\rho pqmm_1ex^{2*}y^{2*}E}{(x^*+y^*)^2}$\\

\begin{theorem}{\label{thm:3.9}}
	The condition for stability of coexistence equilibrium point $E_3$ is given as 
	\begin{subequations}
		\begin{align}
			a<e \hspace{0.4cm} and \hspace{0.4cm} m_2<\rho p m_1.\label{eq:(23a)}\\
			and\hspace{0.5cm} s_{11}s_{12}-s_{13} >0.\label{eq:(23b)}
		\end{align}
	\end{subequations}
	\begin{proof}
		Using Routh-Hurwitz criterion for stability of $E_3$ the following conditions are required:
		$$
		s_{11}>0,\hspace{0.1cm} s_{12}>0,\hspace{0.1cm} s_{11}s_{12}-s_{13}>0
		$$
		Simplifying these, gives conditions \ref{eq:(23a)} and \ref{eq:(23b)}. 
	\end{proof}
\end{theorem}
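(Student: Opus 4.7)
The plan is to apply the Routh--Hurwitz stability criterion directly to the characteristic polynomial $\mu_2^3 + s_{11}\mu_2^2 + s_{12}\mu_2 + s_{13} = 0$ displayed just before the theorem. For a cubic with real coefficients, all roots have negative real parts if and only if $s_{11} > 0$, $s_{13} > 0$, and $s_{11}s_{12} - s_{13} > 0$ (these jointly force $s_{12} > 0$). So the entire proof reduces to showing that the two stated hypotheses (\ref{eq:(23a)}) and (\ref{eq:(23b)}) imply this triple of inequalities.

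First I would handle $s_{13}$: reading off the formula, $s_{13}$ is a product of strictly positive quantities at any genuine interior equilibrium ($x^*, y^*, E^* > 0$ and all parameters positive), so $s_{13} > 0$ is automatic and requires no hypothesis. Next I would work on the sign of $s_{11}$. Using the grouping
\[
s_{11} = x^{*} + \frac{(e-a)x^{*}y^{*}}{(x^{*}+y^{*})^{2}} + \frac{q m y^{*} E^{*}(\rho p m_{1} - m_{2})}{(m_{1}E^{*}+m_{2}y^{*})^{2}},
\]
the condition $a < e$ makes the second term non-negative and the condition $m_2 < \rho p m_1$ makes the third term non-negative; the first term $x^*$ is strictly positive, so the whole sum is strictly positive. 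This is where hypothesis (\ref{eq:(23a)}) is used. The same hypothesis $a<e$ also controls $s_{12}$: the first summand $e x^{*2} y^{*}/(x^*+y^*)^2$ is positive, while in the second summand the bracket $a m_2 - (a-e)\rho p m_1 = a m_2 + (e-a)\rho p m_1$ is the sum of two non-negative quantities under $a<e$, so $s_{12} > 0$ follows.

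Finally, hypothesis (\ref{eq:(23b)}) is literally the last Routh--Hurwitz inequality $s_{11}s_{12} - s_{13} > 0$, so there is nothing further to prove; combining it with the signs of $s_{11}, s_{12}, s_{13}$ just established completes the argument and all three eigenvalues of $J|_{E_3}$ have negative real parts, giving local asymptotic stability of $E_3$.

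I do not anticipate a real obstacle here; the only mildly delicate point is recognising that (\ref{eq:(23a)}) is sufficient but not necessary for $s_{11}>0$ and $s_{12}>0$ (each coefficient could still be positive if one of the parenthesised terms is negative but dominated), which is why the theorem is phrased as a sufficient condition. I would make this clear in a closing remark so the reader does not mistake (\ref{eq:(23a)}) for an equivalence. No further computation or transformation of the system (blow-up, Lyapunov, etc.) is needed, since the Jacobian at $E_3$ is regular and the algebraic Routh--Hurwitz test suffices.
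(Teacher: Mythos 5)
Your proposal is correct and follows essentially the same route as the paper: apply the Routh--Hurwitz criterion to the cubic characteristic polynomial at $E_3$ and verify that (\ref{eq:(23a)}) forces $s_{11}>0$ and $s_{12}>0$ while (\ref{eq:(23b)}) is the remaining Hurwitz inequality. You simply supply the sign-checking details (including the automatic positivity of $s_{13}$) that the paper's one-line ``simplifying these'' leaves implicit, and your closing remark that (\ref{eq:(23a)}) is sufficient but not necessary is a fair caveat.
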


\section{Bifurcation Analysis:}
Let us introduce the following new variables:\\
  $ a_1=\frac{aex^*y^*}{(x^*+y^*)^2}\hspace{0.2cm}\mbox{and}\hspace{0.2cm} b_1=\frac{\rho pqm_1m_2y^*E^*}{(m_1*E^*+m_2*y^*)^2}.$\\
%Then matrix $J$ computed at $E_3$ is
%$$J|_{E_3}=
%\begin{bmatrix}
%\frac{a_1}{e}-x^*&&-\frac{a_1x^*}{ey^*}&&0\\
%\frac{a_1y^*}{ax^*}&&-\frac{a_1}{a}+\frac{mb_1}{\rho %pm_1}&&-\frac{my^*b_1}{\rho pm_1E^*}\\
%0&&\frac{mE^*b_1}{m_2y^*}&&-\frac{ mb_1}{m_2}
%\end{bmatrix}$$
The characteristic equation \ref{eq:(22)} is simplified as
\begin{equation}\label{eq:(24)}
	\mu_2^3+s_{11}\mu_2^2+s_{12}\mu_2+s_{13}=0
\end{equation}
where $s_{11}=x^*+\frac{a_1(e-a)}{ae}+\frac{mb_1(\rho pm_1-m_2)}{\rho pm_1m_2}$\\
$s_{12}=\frac{a_1x^*}{a}+\frac{ma_1b_1(e-a)}{eam_2}+\frac{mb_1x^*(\rho pm_1-m_2)}{\rho pm_1m_2}+\frac{ma_1b_1}{\rho pm_1e}.$\\ 
$s_{13}=\frac{ma_1b_1x^*}{am_2}>0$.\\
Now, $\Delta(m)=s_{11}s_{12}-s_{13}=C_1m^2+C_2m+C_3$\\ 
with $C_1=-\frac{a_1b_1^2(\rho pm_1-m_2)^2}{\rho^2 m_1^2m_2^2p^2e}+\frac{b_1^2x^*(\rho pm_1-m_2)^2}{\rho^2 m_1^2m_2^2p^2}+\frac{a_1b_1^2(\rho pm_1-m_2)}{\rho pm_1m_2^2a}$\\
$C_2=\frac{2a_1b_1x^*(e-a)(\rho pm_1-m_2)}{\rho pm_1m_2ae}+\frac{b_1x^{*2}(\rho pm_1-m_2)}{\rho pm_1m_2}+\frac{a_1^2b_1(e-a)}{\rho pm_1ae^2}+\frac{a_1^2b_1(e-a)^2}{e^2a^2m_2}$\\
$C_3=\frac{a_1x^{*2}}{a}+\frac{a_1^2x^*(e-a)}{ea^2}.$\\
\begin{theorem}
	If $E_3=(x^*,y^*,E^*)$ exists and condition \ref{eq:(23a)} is satisfied alongwith $C_1<0$, then hopf bifurcation exists in the neighbourhood of $E_3$ for $m^*=\frac{-C_2- \sqrt{C_2^2-4C_1C_3}}{2C_1}$.
\end{theorem}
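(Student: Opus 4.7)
The plan is to apply the standard Hopf bifurcation criterion for a three-dimensional system with characteristic polynomial $\mu_2^3+s_{11}\mu_2^2+s_{12}\mu_2+s_{13}=0$. The algebraic backbone is the identity $(\mu_2+s_{11})(\mu_2^2+s_{12}) = \mu_2^3+s_{11}\mu_2^2+s_{12}\mu_2+s_{11}s_{12}$, which shows that whenever $\Delta(m):=s_{11}s_{12}-s_{13}=0$ with $s_{11},s_{12}>0$, the spectrum consists of a simple negative real eigenvalue $-s_{11}$ and a conjugate pair $\pm i\sqrt{s_{12}}$ on the imaginary axis. The proof therefore reduces to three tasks: produce a positive root $m^*$ of the quadratic $\Delta(m)=C_1m^2+C_2m+C_3$, check $s_{11}(m^*),s_{12}(m^*)>0$, and verify the transversality condition on the complex pair.

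For the first task, I would use condition (23a) to show $C_3>0$: since $a<e$, both summands of $C_3 = \tfrac{a_1 x^{*2}}{a}+\tfrac{a_1^2 x^*(e-a)}{ea^2}$ are strictly positive. Combined with the hypothesis $C_1<0$, the product of the two roots is $C_3/C_1<0$, the discriminant $C_2^2-4C_1C_3$ is strictly positive, so the quadratic has two real roots of opposite sign. A short sign check using $C_1<0$ identifies the positive root as exactly $m^*=\tfrac{-C_2-\sqrt{C_2^2-4C_1C_3}}{2C_1}$, matching the theorem statement. For the second task, I would observe that condition (23a) makes both $(e-a)$ and $(\rho p m_1-m_2)$ positive; inspecting the stated formulas term by term then shows every summand of $s_{11}(m^*)$, $s_{12}(m^*)$, $s_{13}(m^*)$ is positive, so the three coefficients are positive without any cancellation, and the eigenvalues at $m=m^*$ are precisely $-s_{11}(m^*)<0$ and $\pm i\sqrt{s_{12}(m^*)}$.

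For transversality, I would write the complex pair as $\mu_2(m)=\alpha(m)\pm i\beta(m)$ and differentiate the characteristic polynomial implicitly at $m=m^*$; a direct computation (separating real and imaginary parts of $d\mu_2/dm=-(\partial P/\partial m)/(\partial P/\partial \mu_2)$ and using $\omega^2=s_{12}$) yields the familiar identity
\[
\left.\frac{d\alpha}{dm}\right|_{m=m^*}=-\frac{\Delta'(m^*)}{2\bigl(s_{11}^2(m^*)+s_{12}(m^*)\bigr)}.
\]
Since $\Delta'(m^*)=2C_1m^*+C_2=-\sqrt{C_2^2-4C_1C_3}\neq 0$ by the quadratic formula, and the denominator is positive, the real part of the complex pair crosses zero with nonzero speed, so a Hopf bifurcation occurs at $m=m^*$. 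The only delicate point in the whole argument is the sign verification in the second task, since the formulas for $s_{11}$ and $s_{12}$ mix the potentially signed differences $(e-a)$ and $(\rho p m_1-m_2)$; condition (23a) is precisely what is needed to make each summand positive term by term, and the transversality step then reduces to the one-line observation above.
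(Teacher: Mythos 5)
Your proof is correct and follows essentially the same route as the paper: the paper checks $s_{11}>0$ under \ref{eq:(23a)}, notes $s_{13}>0$, obtains the positive root $m^*$ of $\Delta(m)=0$ from $C_3>0$ and $C_1<0$, asserts $\Delta'(m^*)\neq 0$, and then invokes Liu's criterion. The only difference is that you derive Liu's criterion inline --- the factorization of the characteristic polynomial at $\Delta=0$ into $(\mu_2+s_{11})(\mu_2^2+s_{12})$ and the transversality identity $\left.\tfrac{d\alpha}{dm}\right|_{m^*}=-\Delta'(m^*)/\bigl(2(s_{11}^2+s_{12})\bigr)$ --- and you explicitly compute $\Delta'(m^*)=-\sqrt{C_2^2-4C_1C_3}\neq 0$, which the paper leaves as a bare assertion.
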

\begin{proof}
	As $s_{11},s_{13}$ and $\Delta$ are smooth functions of $m$ in an open interval about $m^* \in \mathbb{R}$ such that 
	\renewcommand{\labelenumi}{(\roman{enumi})}
	\begin{enumerate}
		\item  The condition \ref{eq:(23a)} guarantees that $s_{11}>0$ .
		\item The condition \ref{eq:(23a)} gives, $C_2$ and $C_3>0$. The condition $C_1<0$ guarantees the positive root $m^*$ of $\Delta m$.
		\item ($\frac{d\Delta}{dm})|_{m=m^*}\neq 0$
		
	\end{enumerate}
	Hence by applying Liu's Criterion, hopf bifurcation occurs about $m=m^*.$
\end{proof}
\section{Global Stability:}
\begin{theorem}
	The interior equilibrium point is globally asymptotically stable in the domain of attraction given by
	\begin{equation}\label{eq:(25)}
		\frac{ay^*}{(x^*+y^*)}<x+y,\hspace{0.5cm} \frac{(m_1E+m_2y)}{(x+y)}<\frac{qmm_2E^*(x^*+y^*)}{ex^*(m_1E^*+m_2y^*)} 
	\end{equation}
\end{theorem}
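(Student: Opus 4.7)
The natural approach is Lyapunov's direct method using a weighted Volterra-type function built from the three equilibrium components. Specifically, I would propose
\begin{equation*}
V(x,y,E) = \Bigl(x - x^* - x^*\ln\tfrac{x}{x^*}\Bigr) + w_1 \Bigl(y - y^* - y^*\ln\tfrac{y}{y^*}\Bigr) + w_2 \Bigl(E - E^* - E^*\ln\tfrac{E}{E^*}\Bigr),
\end{equation*}
where the positive weights $w_1,w_2$ are chosen later. Since $u - u^* - u^*\ln(u/u^*) \ge 0$ with equality iff $u=u^*$, $V$ is positive definite and radially unbounded on the positive octant with its unique minimum at $E_3$.

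The computation proceeds by differentiating $V$ along trajectories, yielding $\dot V = (x-x^*)\dot x/x + w_1(y-y^*)\dot y/y + w_2(E-E^*)\dot E/E$. I would then use the equilibrium identities $1-x^* = \frac{ay^*}{x^*+y^*}$, $\frac{ex^*}{x^*+y^*} = d + \frac{qmE^*}{m_1E^*+m_2y^*}$, and $\frac{pqmy^*}{m_1E^*+m_2y^*} = c$ to rewrite each logistic or Michaelis-Menten term as the difference between its value at the current state and at the equilibrium. Each such difference admits the key algebraic factorizations
\begin{equation*}
\frac{y}{x+y} - \frac{y^*}{x^*+y^*} = \frac{y^*(x-x^*) - x^*(y-y^*)}{(x+y)(x^*+y^*)},
\end{equation*}
\begin{equation*}
\frac{E}{m_1 E+m_2 y} - \frac{E^*}{m_1 E^*+m_2 y^*} = \frac{m_2\bigl(E^*(y-y^*) - y^*(E-E^*)\bigr)}{(m_1E+m_2y)(m_1E^*+m_2y^*)},
\end{equation*}
and analogously for the $y/(m_1E+m_2y)$ term. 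Substituting these, $\dot V$ becomes a quadratic form in the deviations $(x-x^*),(y-y^*),(E-E^*)$ with state-dependent coefficients.

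The next step is to kill the cross-terms by tuning the weights. Setting $w_1 = \frac{ax^*}{ey^*}$ cancels the $(x-x^*)(y-y^*)$ coefficient, and then setting $w_2 = \frac{w_1 m_2 y^*}{\rho p m_1 E^*}$ cancels the $(y-y^*)(E-E^*)$ coefficient (the $(x-x^*)(E-E^*)$ term never appears because $f_1$ does not involve $E$ and $f_3$ does not involve $x$). With these weights, $\dot V$ reduces to a pure sum of squared terms whose coefficients are
\begin{equation*}
-1 + \frac{ay^*}{(x^*+y^*)(x+y)},\qquad w_1\!\left[\frac{qmm_2E^*}{(m_1E^*+m_2y^*)(m_1E+m_2y)} - \frac{ex^*}{(x+y)(x^*+y^*)}\right],
\end{equation*}
and the strictly negative $-w_2 \rho p q m m_1 y^*/[(m_1E+m_2y)(m_1E^*+m_2y^*)]$ for the $(E-E^*)^2$ term. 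Requiring the first two to be negative yields precisely the two inequalities \eqref{eq:(25)} stated in the theorem. Invoking LaSalle's invariance principle on the forward-invariant region where \eqref{eq:(25)} holds then gives $(x,y,E) \to E_3$.

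The main obstacle is bookkeeping: the three rational terms must be rewritten through the factorizations above, and the cross-term cancellations must be verified carefully, since a sign or normalization error would shift the resulting domain conditions. A secondary obstacle is showing forward invariance of the stated region, which requires checking that trajectories beginning in the domain do not leave through the boundaries defined by \eqref{eq:(25)}; this follows by inspecting the sign of $\dot V$ and the boundedness result already established in Lemma on boundedness.
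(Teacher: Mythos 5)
Your proposal is essentially identical to the paper's proof: the same weighted Volterra--Lyapunov function, the same factorizations of the ratio-dependent and Michaelis--Menten terms, and exactly the same weight choices ($w_1=\tfrac{ax^*}{ey^*}$ and $w_2=\tfrac{am_2x^*}{\rho e p m_1 E^*}$) to cancel the cross terms. One small caveat: negativity of your second squared coefficient actually requires $\tfrac{m_1E+m_2y}{x+y}>\tfrac{qmm_2E^*(x^*+y^*)}{ex^*(m_1E^*+m_2y^*)}$, i.e.\ the reverse of the second inequality as printed in \eqref{eq:(25)}, so your claim that the conditions match ``precisely'' inherits a sign-direction slip that is present in the theorem statement itself rather than in your derivation.
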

\begin{proof}
	To analyse the global stability, construct the following positive definite function for arbitrarily chosen positive constants $l_i, i=1,2,3:$
	$$L(x,y,E)=l_1[(x-x^*)-x^*log(\frac{x}{x^*})]+l_2[(y-y^*)-y^*log(\frac{y}{y^*})]+l_3[(E-E^*)-E^*log(\frac{E}{E^*})]$$
	(It can be easily shown that $V(x^*,y^*,E^*)=0$ and positive for all positive values of $x,y,E.$)\\
	Now the time deriative of $V$ is
	$$\frac{dV}{dt}=l_1\frac{(x-x^*)}{x}\frac{dx}{dt}+l_2\frac{(y-y^*)}{y}\frac{dy}{dt}+l_3\frac{(E-E^*)}{E}\frac{dE}{dt}$$
	$$=l_1(x-x^*)(1-x-\frac{ay}{x+y})+l_2(y-y^*)(-d+\frac{ex}{x+y}-\frac{qEm}{(m_1E+m_2y)^2})+l_3(E-E^*)[\lambda(\frac{pqmy}{m_1E+m_2y}-c)]$$
	$$=-l_1(x-x^*)^2[1-\frac{l_1ay^*}{(x+y)(x^*+y^*)}]-l_2(y-y^*)^2[\frac{ex^*}{(x+y)(x^*+y^*)}-\frac{qmm_2E^*}{(m_1E+m_2y)(m_1E^*+m_2y^*)}]$$
	$$-\frac{l_3\lambda pqmm_1y^*(E-E^*)^2}{(m_1E+m_2y)(m_1E^*+m_2y^*)}+\frac{(x-x^*)(y-y^*)}{(x+y)(x^*+y^*)}[-l_1ax^*+l_2ey^*]$$
	$$+\frac{(E-E^*)(y-y^*)}{(m_1E+m_2y)(m_1E^*+m_2y^*)}[-l_2qm_2y^*+l_3\lambda pqmm_1E^*]$$ 
	Now letting $l_1=1,l_2=\frac{ax^*}{ey^*},l_3=\frac{am_2x^*}{\lambda epm_1E^*}$, we get
	$$\frac{dV}{dt}=-(x-x^*)^2[1-\frac{ay^*}{(x+y)(x^*+y^*)}]-(y-y^*)^2(\frac{ax^*}{ey^*})[\frac{ex^*}{(x+y)(x^*+y^*)}-\frac{qmm_2E^*}{(m_1E+m_2y)(m_1E^*+m_2y^*)}]$$
	$$-\frac{am_2x^*}{\lambda epm_1E^*}\frac{\lambda pqmm_1y^*(E-E^*)^2}{(m_1E+m_2y)(m_1E^*+m_2y^*)}$$
	So, whenever condition \ref{eq:(25)} is satisfied,
	$$\frac{dV}{dt}<0.$$
	Accordingly, V is a Lyapunov function in the domain \ref{eq:(25)}.
\end{proof}
\section{Optimal Harvesting Policy:}
To arrive at an optimal harvesting policy, consider the following functional for maximization:  
\begin{equation}\label{eq:(26)}
	J=\int_{0}^{\infty} e^{-\delta t}(\frac{pqmy}{m_1E+m_2y}-c)E
\end{equation}
where $\delta$ represents the annual discount rate. \\

The aim is to optimize equation \ref{eq:(26)} with state constraints \ref{eq:(2)} using the Pontraygin's Maximal Principle\cite{clark1976mathematical}.\\
\\\\
Let $\lambda_i, i=1,2,3$ be the adjoint variables and $m$ is the control variable with $0\leq m \leq m_{max}$. The Hamiltonian function for the control problem is considered as
$$H=(\frac{pqmy}{m_1E+m_2y}-c)E+\lambda_1(x(1-x)-\frac{axy}{x+y})+\lambda_2 (\frac{exy}{x+y}-dy-\frac{qmEy}{m_1E+m_2y})$$
$$+\lambda_3 \lambda(\frac{pqmyE}{m_1E+m_2y}-cE)$$
Assuming that the control constraint $m$ is not binding i.e optimal solution does not occur at $0$ or $m_{max},$ then the singular control  is 
\begin{equation}\label{eq:(27)}
	\frac{\partial H}{\partial m}=\frac{pqyE}{m_1E+m_2y}+\lambda_2(-\frac{qyE}{m_1E+m_2y})+\lambda_3\frac{\lambda pqyE}{m_1E+m_2 y}=0
\end{equation}

The adjoint variables are evaluated using the equations
$$
\frac{d\lambda_1}{dt}=\delta \lambda_1 -\frac{\partial H}{\partial x}
$$
\begin{equation}\label{eq:(28)}
	\frac{d\lambda_1}{dt}=\delta \lambda_1-(\lambda_1(1-2x-\frac{ay^2}{(x+y)^2})+\lambda_2(\frac{ey^2}{(x+y)^2}))
\end{equation}
$$
\frac{d\lambda_2}{dt}=\delta \lambda_2 -\frac{\partial H}{\partial y}
$$
\begin{equation}\label{eq:(29)}
	\frac{d\lambda_2}{dt}=\delta \lambda_2-(\frac{pqmm_2y^2}{(m_1E+m_2y)^2}+\lambda_1(-\frac{ax^2}{(x+y)^2})+\lambda_2(-d+\frac{ex^2}{(x+y)^2}-\frac{qmm_1E^2}{(m_1E+m_2y)^2})$$
	$$\hspace{2cm}+\lambda_3(\frac{\lambda pqmm_1E^2}{(m_1E+m_2y)^2}))
\end{equation}
$$\frac{d\lambda_3}{dt}=\delta \lambda_3-\frac{\partial H}{\partial E}$$
\begin{equation}\label{eq:(30)}
	\frac{d\lambda_3}{dt}=\delta \lambda_3-[(\frac{pqmm_2y^2}{(m_1E+m_2y)^2}-c)+\lambda_2(-\frac{qmm_2y^2}{(m_1E+m_2y)^2})+\lambda_3(-\frac{\lambda pqmm_1Ey}{(m_1E+m_2y)^2})]
\end{equation}
Simplifying the above equations gives
$$\frac{d \lambda_1}{dt}=a_1 \lambda_1 +a_2 \lambda_2$$
$$\frac{d \lambda_2}{dt}=b_1+b_2 \lambda_1+b_3 \lambda_2+b_4 \lambda_3 $$
$$\frac{d \lambda_3}{dt}=c_1 +c_2 \lambda_2 +c_3 \lambda_3$$
The constants $a_i,b_i$ and $c_i$ in above expression are defined below\\ $a_1=\delta +x-\frac{axy}{(x+y)^2},$ $a_2=-\frac{ey^2}{(x+y)^2},$ $b_1=-\frac{pqmm_2y^2}{(m_1E+m_2y)^2},$ $b_2=\frac{ax^2}{(x+y)^2},$\\ $b_3=\delta+\frac{exy}{(x+y)^2}-\frac{qmm_2yE}{(m_1E+m_2y)^2},$ $b_4=-\frac{\lambda pqmm_1E^2}{(m_1E+m_2y)^2}$\\
$c_1=-\frac{pqmm_2y^2}{(m_1E+m_2y)^2}+c,$ $c_2=\frac{qmm_2y^2}{(m_1E+m_2y)^2},$ $c_3=\delta +\frac{\lambda pqmm_2Ey}{(m_1E+m_2y)^2e}$\\
After solving the values of $\lambda_i $ are obtained as
$$\lambda_1=-\frac{a_2}{a_1}\lambda_2$$
$$\lambda_2=\frac{a_1(-b_1c_3+b_4c_1)}{a_1b_3c_3-b_2a_2c_3-b_4c_2a_1}$$
$$\lambda_3=-\frac{(c_1+c_2)}{c_3}\lambda_2$$
Using the values of $\lambda_i, i=1,2,3$ in equation \ref{eq:(27)}, a value of $m$ which is the optimal value for harvesting of predators is obtained.

\section{Numerical Simulation:}
In this section, the dynamical behavior of the system \ref{eq:(2)} is analyzed numerically with different parametric values.

\begin{example}
	\normalfont
	For the parameters chosen as in Figure 1, it is observed that the equilibrium points $E_2$ and $E_3$ do not exist.  Also, the state $E_0$ is stable as the stability condition (i) of Theorem \ref{thm:3.4} is satisfied. Accordingly, all the solution trajectories starting with different initial conditions are converging to $E_0$ in Figure \ref{fig:Figure 1(a)}. This confirms stability of $E_0$. Since both the conditions of Theorem \ref{thm:3.7} are not satisfied, the equilibrium $E_1$ is not stable. The observation in Figure \ref{fig:Figure 1(b)} is in agreement with the result.
\end{example}
\begin{figure}[!ht]
\centering
	\subcaptionbox{\label{fig:Figure 1(a)}}{\includegraphics[width=2in,height=2in]{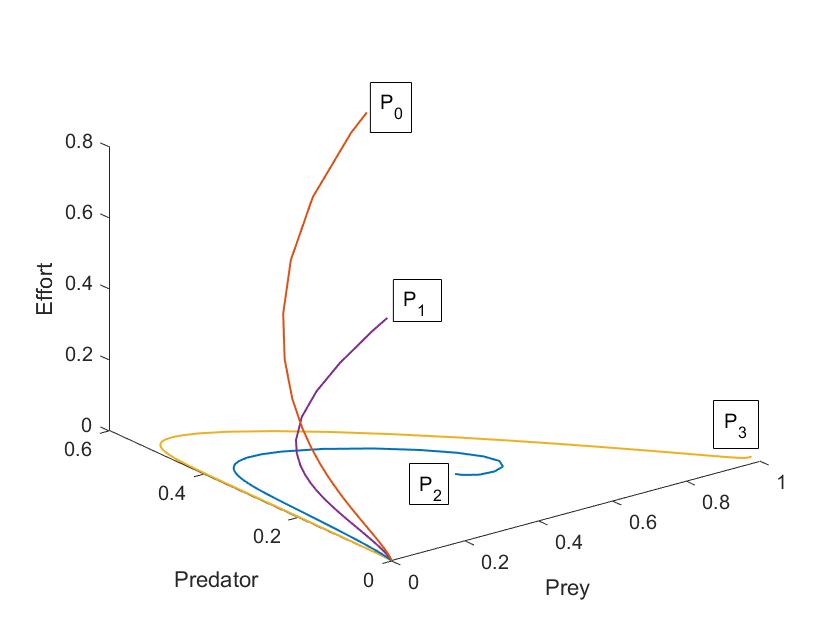}}
	\subcaptionbox{\label{fig:Figure 1(b)}}{\includegraphics[width=2in,height=2in]{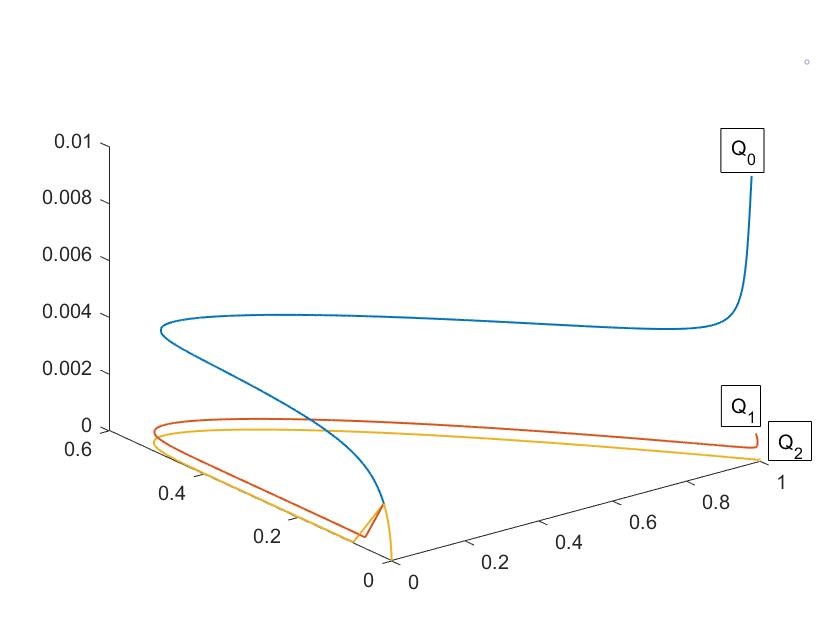}}
	
	%\caption {Stability of $E_0$}
	{\textit{	\footnotesize Parametric Choice: $ a=2, c=0.6, d=0.07, e=0.6, q=0.6, m_1=0.4, m_2=0.4, m=0.5, \rho =1,p=0.8$.\\ Initial conditions (a): $P_0=(0.7,0.6,0.7), P_1=(0.5,0.4,0.3), P_2=(0.3,0.1,0.1),P_3=(0.99,0.01,0.01)$.\\ Initial conditions(b) $Q_0=(0.99,0.01,0.01), Q_1=(0.99,0.001,0.001), Q_2=(0.999,0.0001,0.0001)$.}
		
		\caption {Stability of $E_0$}}
	\label{Figure 1}
\end{figure}

\begin{example}
	\normalfont
	Both the equilibrium states $E_2$ and $E_3$ do not exist for the data set of Figure 2  also.  The equilibrium state $E_1$ is shown as attractor for various initial conditions in Figure \ref{Figure 2(a)}. Since none of the conditions of Theorem \ref{thm:3.7} is satisfied, the trajectories with initial conditions in the neighbourhood of $E_0$ are also shown to be attracted by $E_1$ (Figure \ref{Figure 2(b)}). It is concluded that $E_0$ is unstable in this case. 
\end{example}	

\begin{figure}[!ht]
\centering
	\subcaptionbox{\label{Figure 2(a)}}{\includegraphics[width=2in,height=2in]{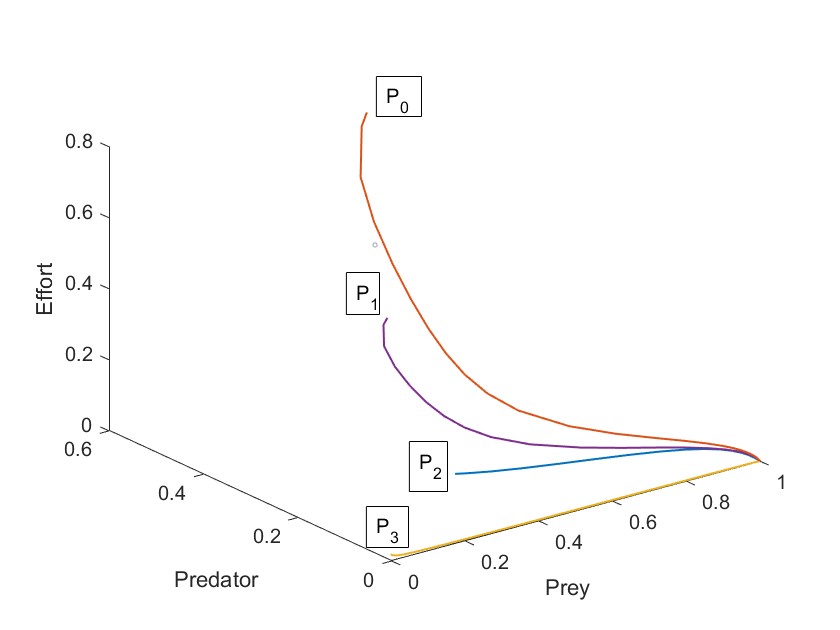}}
	\subcaptionbox{\label{Figure 2(b)}}{\includegraphics[width=2in,height=2in]{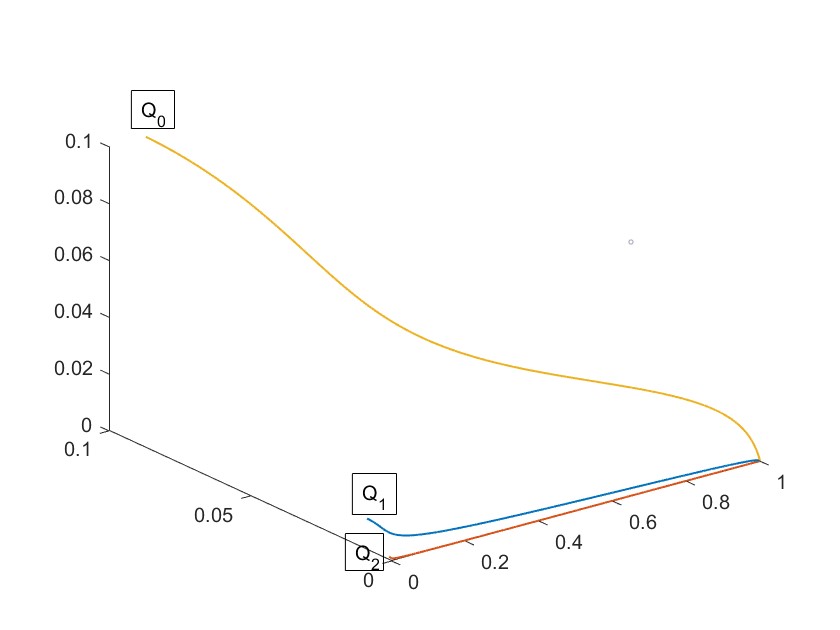}}
	\\	{\textit{	\footnotesize Parametric Choice:  $ a=2, 	c=0.6, d=0.07,
			e=0.6,
			q=0.8,
			m_1=0.4,
			m_2=0.4,
			m=0.6,
			\rho =1,p=1
			$. \\Initial conditions (a): $P_0=(0.7,0.6,0.7), P_1=(0.5,0.4,0.3), P_2=(0.3,0.1,0.1), P_3=(0.01,0.01,0.01).$\\ Initial conditions(b): $Q_0=(0.1,0.1,0.1), Q_1=(0.01,0.01,0.01), Q_2=(0.001,0.001,0.001)$. } 
		\caption{Stability of $E_1$}}
	\label{Figure 2}
\end{figure}
\begin{example}
	\normalfont
	In the subsequent examples, the parameters are suitably chosen so as to ensure the existence of $E_3$.
	With the parameteric values as in \ref{Figure 3}, the system \ref{eq:(2)} is found to be persistent for $ m>0.33$. This value is computed from \ref{lemma 2.3}. Figure \ref{Figure 3} confirms the persistence of the system \ref{eq:(2)}. In the Figure, \ref{Figure 3} it seems solution tending to zero but it is confirmed that they tend to non zero numerical values for very large $t$.
\end{example}

\begin{figure}[!ht]
\centering
	\subcaptionbox{\label{Figure 3(a)}}{\includegraphics[width=2in,height=2in]{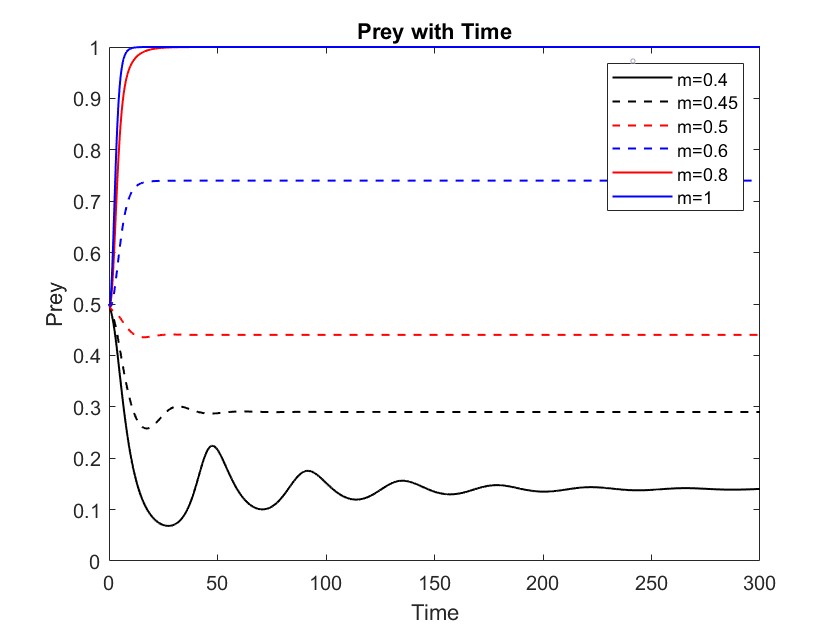}}
	\subcaptionbox{\label{Figure 3(b)}}{\includegraphics[width=2in,height=2in]{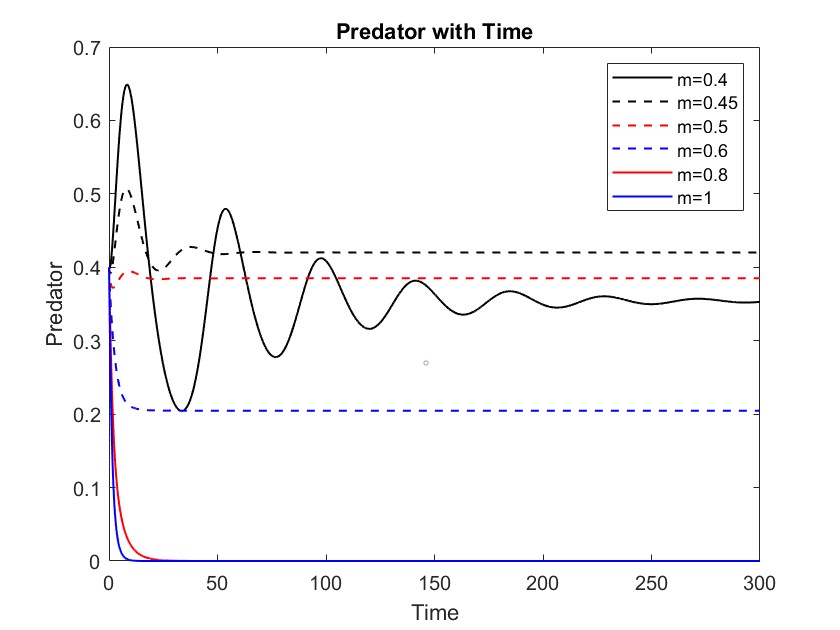}}

	{\subcaptionbox{\label{Figure 3(c)}}{\includegraphics[width=3in,height=2in]{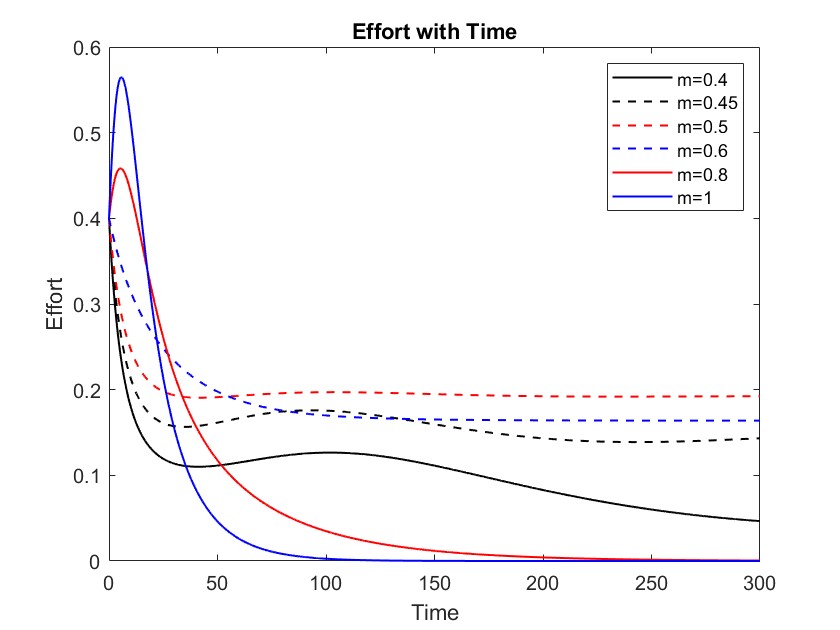}}}
\\	{\textit{\footnotesize  Parametric choice: $ a=1.2, c=3,
			d=0.07,
			e=0.6,
			q=0.6,
			m_1=0.4,
			m_2=0.4,
			p=6,
			\hspace{0.2cm} \mbox{and} \hspace{0.2cm} \rho =1. $ }}
	\caption{Time Series}
	\label{Figure 3}
\end{figure}  
It is observed that increasing $m$ will decrease the predator population as more predators are now available for harvesting. Consequently, there is an increase in prey population.
But the time series graph for effort is more interesting and versatile. It is seen with $m=0.35,m=0.4$ and $m=0.45$ the effort first decreases upto a critical level due to poor availability of predators  for harvesting. This causes a sharp rise in predator population. Once the predator grows to sizable numbers, the harvesting effort picks up and grows to the steady state level. During this time, the predator growth slowed down and reaches to the steady state level. For $m>0.8$, the high availability of predators leads to sharp increase in effort with fast decline in predator population.
\begin{example}
	\normalfont
	It can be noted from Theorem \ref{thm:3.9} for data choices of Figure \ref{fig:Figure 4} that only $E_3$ is stable. The solution  trajectories starting in the neighbourhood of $E_0$ and $E_1$ are attracted towards $E_3$. 
\begin{figure}[!ht]
		\centering
			\subcaptionbox{}{\includegraphics[width=3in,height=2in]{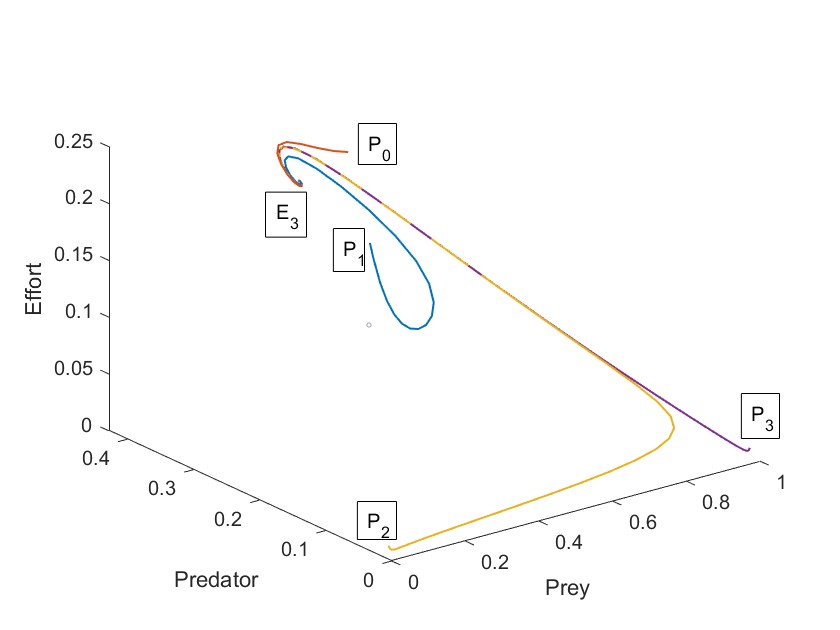}}\\
			{\textit{\footnotesize 	Parameteric Choice: $ a=1.2, c=3,
				d=0.07,
				e=0.6,
				q=0.6,
				m_1=0.4,
				m_2=0.4,
				m=0.5,
				p=6,
				\hspace{0.2cm} \mbox{and} \hspace{0.2cm} \rho =1. $\\ Initial conditions: $P_0=(0.3,0.2,0.2), P_1=(0.6,0.4,0.2), P_2=(0.01,0.01,0.01), P_3=(0.99,0.01,0.01)$ }}.
		\caption{Stability of coexistence point}
		\label{fig:Figure 4}
	\end{figure}
\end{example}

\begin{example}
	\normalfont
	Keeping all the parameters as in Figure \ref{fig:Figure 4} except $m=0.385$, the existence of Hopf bifurcation is expected. It is verified in Figure \ref{Figure 5}.
\end{example}

\begin{figure}[!ht]
\centering
	\subcaptionbox{\label{Figure 5(a)}}{\includegraphics[width=2in,height=2in]{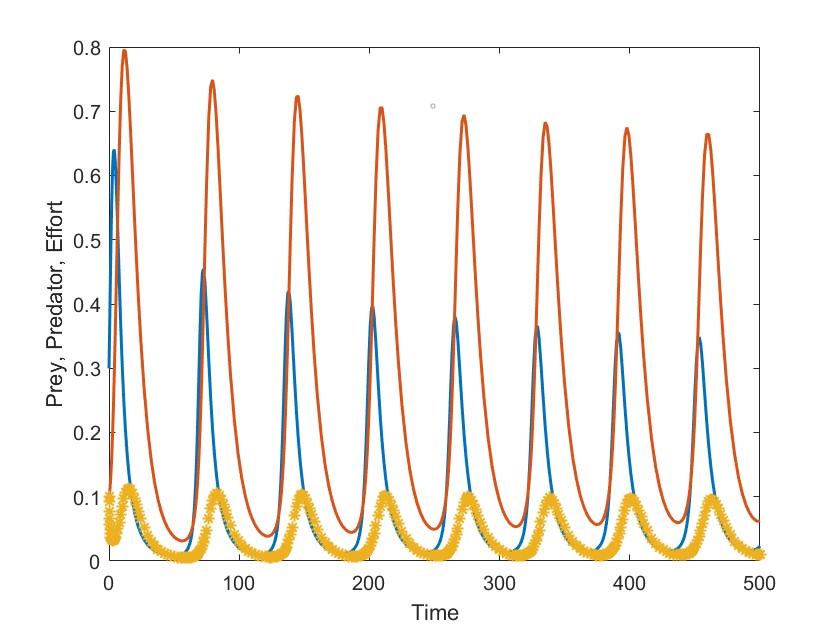}}
	\subcaptionbox{\label{Figure 5(b)}}{\includegraphics[width=2in,height=2in]{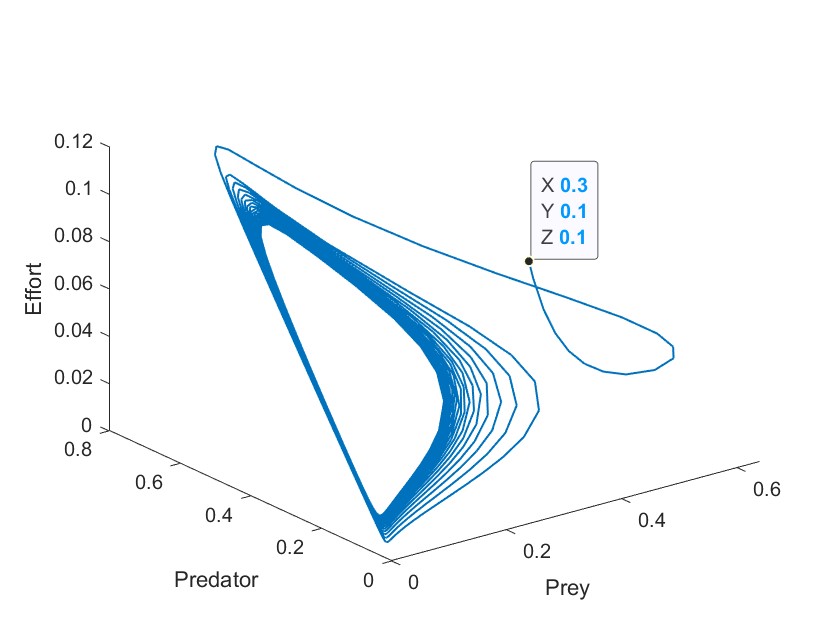}}
\\	{\textit{\footnotesize  Parametric choice: $ a=1.2, c=3, 
			d=0.07,
			e=0.6,
			q=0.6,
			m_1=0.4,
			m_2=0.4,
			m=0.385,
			p=6,
			\hspace{0.2cm} \mbox{and} \hspace{0.2cm} \rho =1. $ }}
	\caption{Hopf bifurcation}
	\label{Figure 5}
\end{figure}

Using Pontraygin's Maximal Principle explained in Section 6 and persistance condition from  Lemma \ref{lemma 2.3} we found the optimal value of $m$ as 0.686 and the coexistence equilibrium point as $(0.9987,0.0017,0.0018).$

\begin{table}[!ht]
	\centering
	\begin{tabular}{l  c  r}
	\hline
		$L_i:d=\rho c+k_i,$&$M_i:d=\frac{m_2}{pm_1}c+n_i,$&$R_i:d=r_i$\\
		\hline
		$k_1=e-\frac{mq}{m_1},$&$n_1=\frac{m_1e-mq}{pm_1},$&$r_1=e$\\
		\hline
		$k_2=\frac{-mq}{m1},$&$n_2=\frac{(a-1)(\rho pmq-m_2)-pmq}{pm_1},$&$r_2=\frac{e^2+\sqrt{e^2-4e^2(e+a)(1-a)}}{2(e+a)}$\\
		\hline $k_3,=e-\frac{\rho pmq}{m_2},$&&$r_3=d=\frac{e(a-1)}{a}$\\
		\hline
		$k_4=-\frac{\rho pmq}{m_2},$&$P_1:c=\frac{pmq}{m_2}$ &\\
		\hline
		\end{tabular}
	\caption{Equations of lines drawn in Figure 6.}
	\label{Table1}
\end{table}
\begin{table}[!ht]
\centering
	\begin{tabular}	{l c}
	\hline
		\textbf{Region}& \textbf{Existence and Stability point}\\
		\hline
		Region (I)&Stability of $E_1$\\
		
		Region (II)&  Stability region of $E_0$ only \\
	
		Region (III)& Stability region of $E_0$ and Existence region of $E_3$\\
		
		Region (IV) & Existence region of $E_3$ \\
	
		Region (V) & Stability region of $E_0$ and $E_2$\\
		
		Region (VI) & Stability region of $E_2$ only.\\
		\hline
	\end{tabular}
	\caption{Regions of Figure 6.}
\label{Table2}
\end{table}
\begin{figure}[!ht]
	\centering
	\includegraphics[width=7in,height=2in]{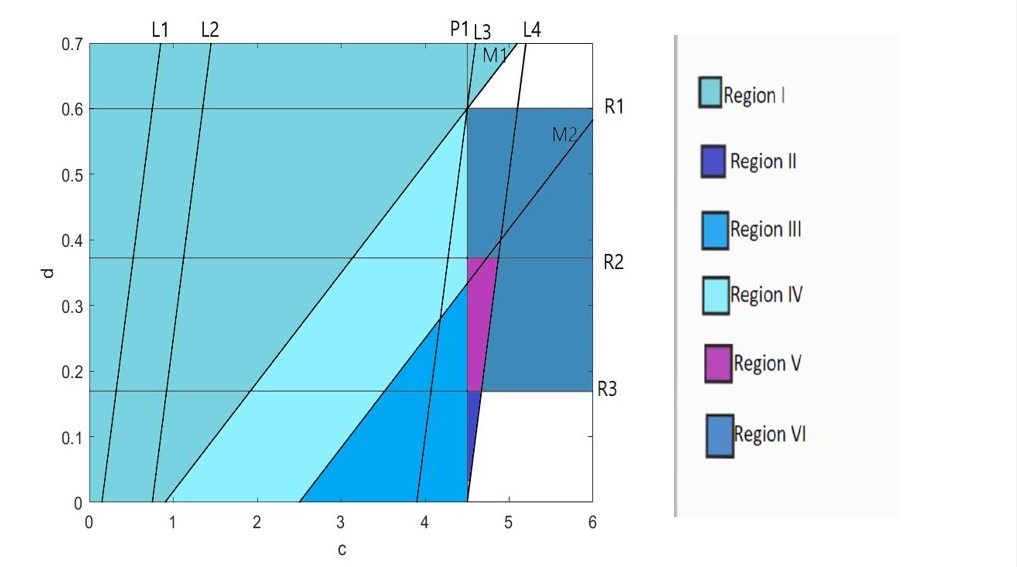}
	{\textit{\footnotesize Parameteric Choice:
			$a=1.4, e=0.6, q=0.6, m_1=0.4, m_2=0.4, p=6, \rho=1, m=0.5$
			on $c-d$ plane}}
		\caption{Regions of Existence and Stability}
		\label{Figure 6}
\end{figure}
The region of existence and stability of various equilibrium points  are examined in Figure \ref{Figure 6}. The bifurcation diagram is drawn with respect to the parameters $c$ and $d$. The equations of different lines in the Figure are given in Table \ref{Table1}. These lines divide the parameter plane into several regions. Considering various theorems and propositions discussed in the text, the behavior of equilibrium points (their existence and stability) in different regions are summarized in the Table \ref{Table2}.

It can be concluded from the Figure \ref{Figure 6} that there may be bistability of $E_0$ and $E_3$ in Region (III). The  parametric values in Figure \ref{Figure 7} are selected so as the data set lies in region III of the bifurcation diagram. This is the region of stability of $E_0$ and existence of $E_3$. It is observed that the solution trajectories with different initial conditions tend to different states. In Figure \ref{Figure7(a)} and \ref{Figure 7(b)} the initial conditions are very close, still they approach to different equilibrium states. In Figure \ref{Figure 7(c)} again the trajectories are going to different states. This confirms the existence of bi-stability in Region III. It may be noted that stability of $E_3$ could not be established analytically throughout in the Region III, however the data set satisfies the stability condition of Theorem \ref{thm:3.9} numerically.\\

\begin{figure}[!ht]
\centering
		\subcaptionbox{\label{Figure7(a)}}{\includegraphics[width=2in,height=2in]{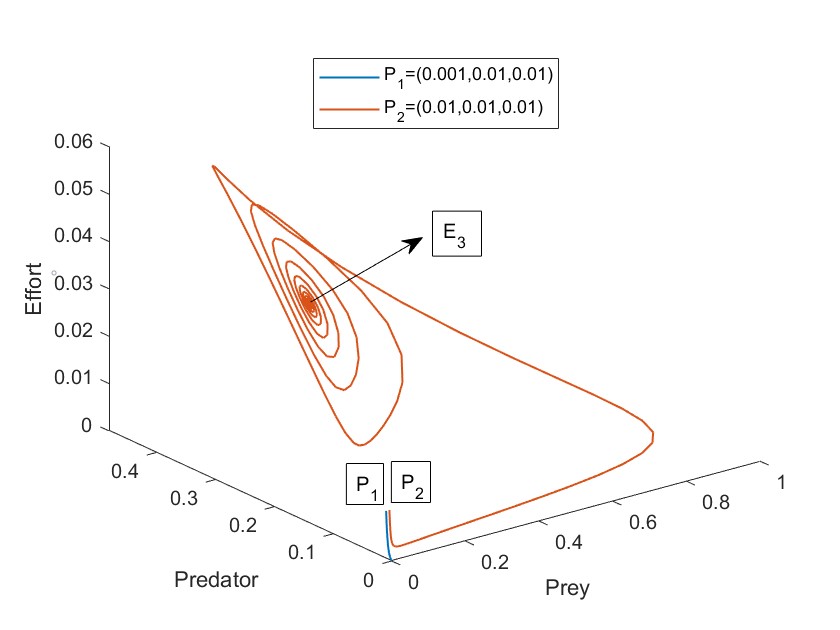}}
	\subcaptionbox{\label{Figure 7(b)}}{\includegraphics[width=2in,height=2in]{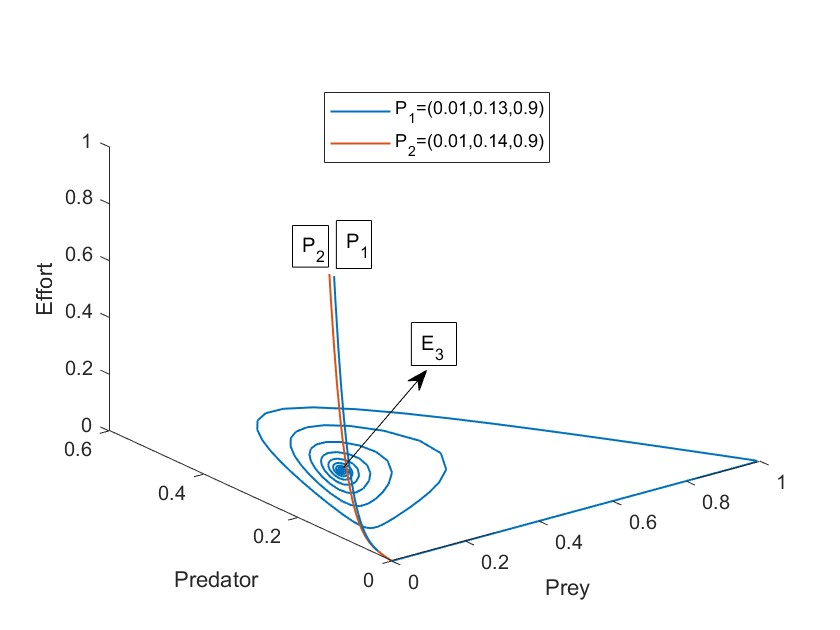}}
			\subcaptionbox{\label{Figure 7(c)}}{\includegraphics[width=3in,height=2in]{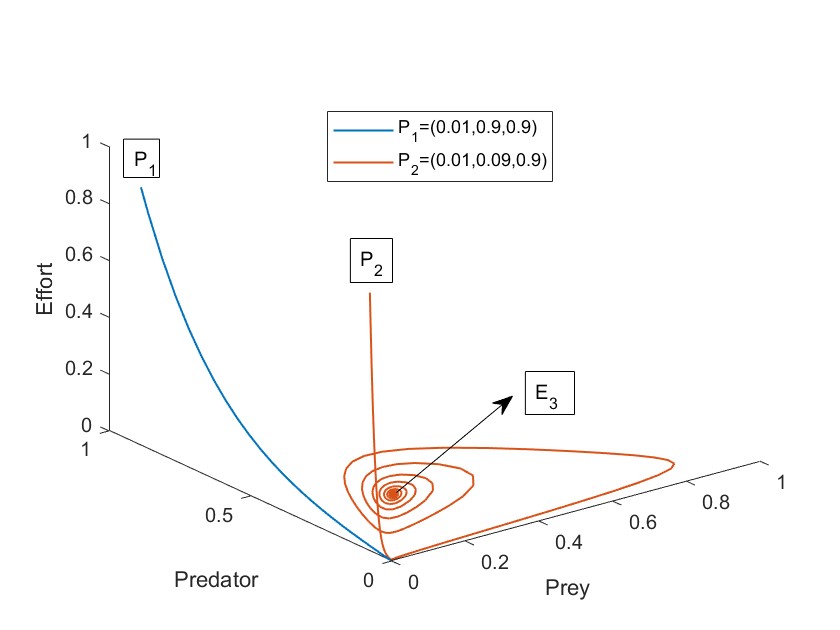}}
			
	{\textit{\footnotesize Parametric Choice: $a=1.4, c=4, d=0.18, e=0.6, q=0.6, m_1=0.4, m_2=0.4, p=6, \rho=1, m=0.5 $.}}
	\caption { Bistability of $E_0$ and $E_3$}
	\label{Figure 7}
\end{figure}
\newpage
\begin{figure}[!ht]
	\subcaptionbox{\label{Figure8(a)}}{\includegraphics[width=2in,height=2in]{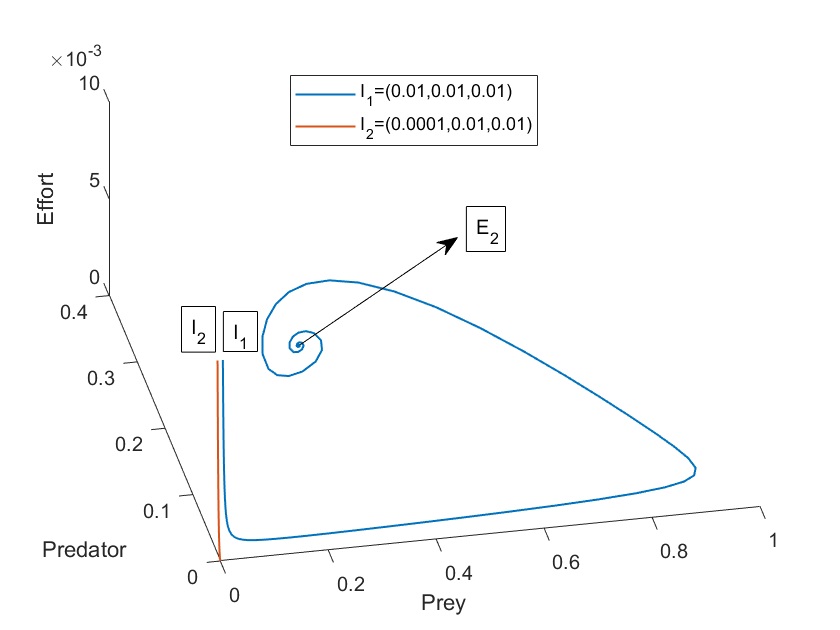}}
	\subcaptionbox{\label{Figure 8(b)}}{\includegraphics[width=2in,height=2in]{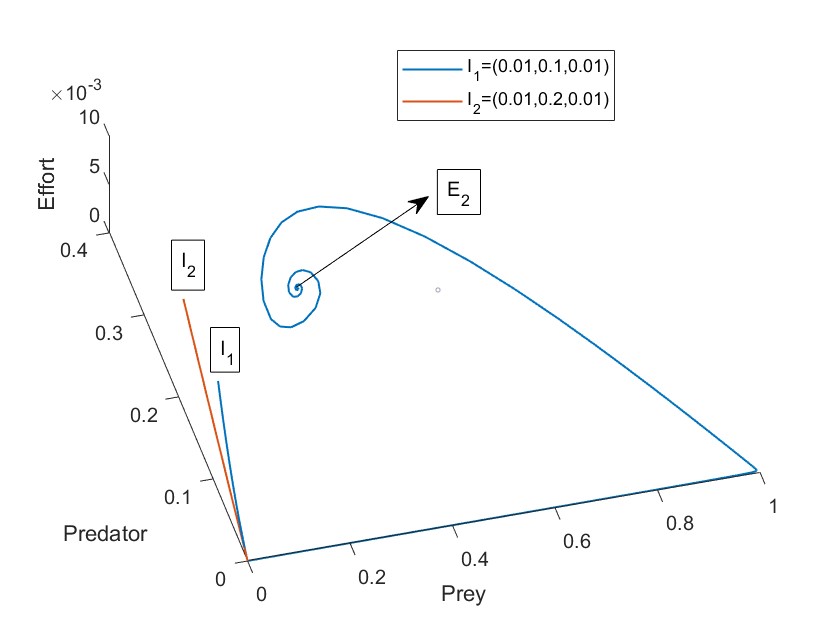}}

	{\textit{\footnotesize Paramteric Choice: $a=1.4, c=4.6, d=0.3, e=0.6, q=0.6, m_1=0.4, m_2=0.4, p=6, \rho=1, m=0.5 $.}}
		\caption {Bistability of $E_0$ and $E_2$}
	\label{Figure 8}
\end{figure}
Similarly, the region (V) of Figure \ref{Figure 6} is also a bistability region of $E_0$ and $E_2$. This is shown in Figure \ref{Figure 8}.
\newpage
\section{Conclusion:}
The effort dynamics of a ratio-dependent predator-prey system with non-linear harvesting is analyzed. The fundamental mathematical properties such as the existence and positivity of the solution are proved. The stability about the equilibrium states is examined using the eigenvalues of the Jacobian matrix. Although the system is not defined about origin, the behaviour of the system around origin is studied using blow-up technique. It is observed that though the initial conditions are positive still the system collapses. This collapse is of two kinds: firstly the whole system collapses i.e. eradiction of both prey and predator species and secondly extinction of the predator only. The  stability of the coexistence equilibrium point is analysed by applying Lyapunov method. Hopf bifurcation with respect to parameter $m$ is obtained in the neighbourhood of coexistence state. All the results that are proved analytically are also verified numerically with different parametric values. The bistability regions are identified and bi-stability is verified for choice of initial conditions. The optimal harvesting value of the fraction of predators available for harvesting is calculated in accordance with Pontraygin's Maximal Principle.

\section{Acknowlegement:}
The author (U. Yadav) is thankful to the "Ministry of Human Resource Development (MHRD)", Government of India, for providing financial support throughout this work (MHR-01-23-200-428).

\bibliographystyle{elsarticle-num}

%\printcredits

%% Loading bibliography style file
%\bibliographystyle{model1-num-names}
%\bibliographystyle{cas-model2-names}

% Loading bibliography database
\bibliography{mybibfile}
\end{document}